\definecolor{Black}{cmyk}{0,0,0,1}
\definecolor{OrangeRed}{cmyk}{0,0.6,1,0}            
\definecolor{DarkBlue}{cmyk}{1,1,0,0.20}
\definecolor{myblue}{rgb}{0.66,0.78,1.00}
\definecolor{Violet}{cmyk}{0.79,0.88,0,0}
\definecolor{Lavender}{cmyk}{0,0.48,0,0}
\newtheorem{theorem}{Theorem}[section]
\newtheorem{lemma}[theorem]{Lemma}
\newtheorem{corollary}[theorem]{Corollary}
\newtheorem{proposition}[theorem]{Proposition}
\theoremstyle{definition}
\newtheorem{example}[theorem]{Example}
\newtheorem{remark}[theorem]{Remark}
\newcommand{\cC}{\mathcal{C}}
\newcommand{\cG}{\mathcal{G}}
\newcommand{\C}{\mathbb{C}}
\newcommand{\D}{\mathbb{D}}
\newcommand{\N}{\mathbb{N}}
\newcommand{\hol}{\mathscr{O}}
\newcommand{\debar}{\bar{\partial}}
\numberwithin{equation}{section}
\begin{document}
\title[Uniform Algebras]{Uniform Algebras and Approximation on Manifolds}
\author{H\aa kan Samuelsson}
\author{Erlend Forn\ae ss Wold}
\address{Matematisk Institutt, Universitetet i Oslo,
Postboks 1053 Blindern, 0316 Oslo, Norway}
\email{erlendfw@math.uio.no}
\email{haakansa@math.uio.no}

%
%
\subjclass[2000]{}
\date{\today}
\keywords{}

\subjclass[2000]{}
\date{\today}
\keywords{}


\maketitle

\begin{abstract}
Let $\Omega \subset \mathbb{C}^n$ be a bounded domain and let $\mathcal{A} \subset \mathcal{C}(\overline{\Omega})$ be a uniform 
algebra generated by a set $F$ of holomorphic and pluriharmonic functions. Under natural assumptions on $\Omega$ and $F$ 
we show that the only obstruction to $\mathcal{A} = \mathcal{C}(\overline{\Omega})$ is that there is a holomorphic disk 
$D \subset \overline{\Omega}$ such that all functions in $F$ are holomorphic on $D$, i.e., the only obstruction is the obvious 
one. This generalizes work by A. Izzo. We also have a generalization of Wermer's maximality theorem to the (distinguished 
boundary of the) bidisk.
\end{abstract}

\section{introduction}

In this article we will discuss versions of two theorems of 
John Wermer: His well known maximality theorem \cite{Wermer53} states that 
if $f\in\mathcal C(b\mathbb D)$, then either $f$ is the boundary value of a holomorphic
function on the disk, or $[z,f]_{b\mathbb D}=\mathcal C(b\mathbb D)$. Here
$[z_1,f]_{b{\mathbb D}}$ denotes the uniform algebra generated by $z$ and $f$ on the boundary of the disk.
A closely related result is the following \cite{Wermer65}:
Let $f\in\mathcal C^1(\overline{\mathbb D})$ and 
assume that the graph $\mathcal G_f(\overline{\mathbb D})$ of $f$ over $\overline{\mathbb D}$ in $\mathbb C^2$
is polynomially convex.  Let $S:=\{z\in\overline{\mathbb D}:\overline\partial f(z)=0\}$.   Then 
$$
[z_1,f]_{\overline{\mathbb D}}=\{g\in\mathcal C({\overline{\mathbb D}}):g|_S\in\mathcal O(S)\}, 
$$
Note that if $f$ is harmonic, then $f$ is holomorphic or $\mathcal O(S)=\mathcal C(S)$. \

We let $PH(\Omega)$ denote the pluriharmonic functions on a domain $\Omega\subset\mathbb C^n$, 
and we let $\Gamma^2$ denote the distinguished boundary of the bidisk $\D^2$. \

Our most complete results are in $\mathbb C^2$ and are contained in the following two theorems:

\begin{theorem}\label{polydisk}
Let $h_j\in PH(\mathbb D^2)\cap\mathcal C^1(\overline{\mathbb D^2})$ for $j=1,...,N$.
Then either there exists a holomorphic disk in $\overline{\mathbb D^2}$ where 
all $h_j$-s are holomorphic, or $\mathbb [z_1,z_2,h_1,...,h_N]_{\overline{\mathbb D^2}}=\mathcal C(\overline{\mathbb D^2})$.
\end{theorem}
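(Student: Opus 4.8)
The plan is to reduce the statement to a local approximation problem on the distinguished boundary $\Gamma^2$ and on the two "analytic" faces of $\partial\mathbb D^2$, using the standard toolkit of uniform algebra theory: peak points, the Bishop/Cole antisymmetric decomposition, and Bishop's criterion for approximation (measures annihilating the algebra). Write $\mathcal A=[z_1,z_2,h_1,\dots,h_N]_{\overline{\mathbb D^2}}$. The maximal ideal space of $\mathcal A$ is naturally a subset of $\overline{\mathbb D^2}$ (polynomial convexity of $\overline{\mathbb D^2}$ plus the fact that the $h_j$ are continuous up to the boundary), so I do not expect any "extra" maximal ideals. The aim is to show that if $\mathcal A\neq\mathcal C(\overline{\mathbb D^2})$, then the set $S$ where $\mathcal A$ fails to be all of $\mathcal C$ carries a nontrivial analytic structure, which I will then exhibit as a holomorphic disk.

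\textbf{Step 1: peak points away from a bad set.} First I would identify, for each boundary point $p\in\partial\mathbb D^2$, a candidate peak function for $\mathcal A$. On the distinguished boundary $\Gamma^2$ the functions $z_1,z_2$ already peak at every point, so every point of $\Gamma^2$ is a peak point for $\mathcal A$ and hence (by Bishop's peak-point criterion, or rather its local version) $\mathcal A$ behaves like $\mathcal C$ near $\Gamma^2$. On the "flat" parts of $\partial\mathbb D^2$, say where $|z_1|<1=|z_2|$, the point is pushed to the boundary in the $z_2$-variable, so the problem localizes to the disk fiber $\{z_1\}\times\overline{\mathbb D}$: there $z_1$ is constant and we are looking at $[z_2,h_1(z_1,\cdot),\dots,h_N(z_1,\cdot)]_{\overline{\mathbb D}}$, i.e., a one-variable Wermer-type problem. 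This is exactly where I would invoke the $\mathbb C^2$ graph version of Wermer's theorem quoted above (or Theorem~\ref{polydisk}'s one-variable predecessor): either some $h_j(z_1,\cdot)$ is non-holomorphic on the fiber and the fiber algebra is $\mathcal C(\overline{\mathbb D})$, or all $h_j(z_1,\cdot)$ are holomorphic in $z_2$ — and if that happens for $z_1$ ranging over a set of positive measure (indeed over an open set, by continuity of $\bar\partial_{z_2}h_j$), one should be able to produce an honest holomorphic disk, giving the first alternative of the theorem. The interior of $\mathbb D^2$ is handled by the local maximum modulus principle, which pushes any interior point of the maximal ideal space to the Shilov boundary, reducing everything to $\partial\mathbb D^2$.

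\textbf{Step 2: the antisymmetric decomposition and gluing.} With peak points established on all of $\overline{\mathbb D^2}$ except possibly a closed set $K$ built from fibers on which all $h_j$ are holomorphic, I would run the Bishop antisymmetric decomposition: $\mathcal A=\mathcal C(\overline{\mathbb D^2})$ iff every maximal set of antisymmetry is a single point. The set of peak points is dense off $K$, so every antisymmetry set is either a point or is contained in $K$; and on $K$ the functions $z_1$ (respectively $z_2$) are non-constant across distinct fibers, so an antisymmetry set inside $K$ must live in a single fiber $\{z_1=a\}$, where $\mathcal A$ restricts to a uniform algebra containing $z_2$ and holomorphic functions — forcing the antisymmetry set to be a holomorphic disk (or to collapse to a point, in which case there is no obstruction). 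The upshot is the dichotomy of the theorem.

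\textbf{Main obstacle.} The hard part will be Step 1 on the flat faces: turning the \emph{fiberwise} conclusion "all $h_j(a,\cdot)\in\mathcal O(\overline{\mathbb D})$ for $a$ in a large set" into a genuine holomorphic disk sitting inside $\overline{\mathbb D^2}$ on which all $h_j$ are holomorphic — the disk need not be a fiber, and one must use pluriharmonicity of the $h_j$ (not merely harmonicity on fibers) to propagate holomorphy in the $z_1$-direction as well. A secondary difficulty is the careful bookkeeping needed to pass from \emph{local} approximation (Bishop's criterion applied near each point, using that $\bar\partial h_j$ vanishes on $S$) to a \emph{global} statement, and making sure the graph/polynomial-convexity hypothesis of the one-variable Wermer theorem is actually available fiber-by-fiber — this should follow from polynomial convexity of $\overline{\mathbb D^2}$ together with the fact that the $h_j$ are global pluriharmonic functions, but it requires an argument.
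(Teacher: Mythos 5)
Your plan has genuine gaps, and the central one is Step 2: Bishop's antisymmetric decomposition together with peak points cannot carry the argument. A maximal set of antisymmetry is determined by the real-valued functions in the algebra, and the generators $z_1,z_2,h_1,\ldots,h_N$ need not produce any nonconstant real-valued elements; for the bidisk algebra $[z_1,z_2]_{\overline{\mathbb D}^2}$ every point of $\Gamma^2$ is a peak point and yet all of $\overline{\mathbb D}^2$ is a single antisymmetry set, so the inference ``peak points dense off $K$ implies every antisymmetry set off $K$ is a point'' is false, and there is no several-variable analogue of Bishop's peak-point criterion that yields $\mathcal A=\mathcal C$ near $\Gamma^2$. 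What actually makes $\Gamma^2$ (and, more generally, the nondegenerate part of the problem) harmless is approximation on totally real, polynomially convex sets: one passes to the graph $\mathcal G_h(\overline{\mathbb D}^2)\subset\mathbb C^{2+N}$, proves it is polynomially convex (Lemma~\ref{pcgraph} -- this uses pluriharmonicity, not just continuity up to the boundary, so even your identification of the maximal ideal space already needs it), and then applies a Carleman/Oka--Weil type theorem on stratified totally real sets (Proposition~\ref{P1} and Theorem~\ref{approx}). This machinery, which is where the content of the theorem lies, does not appear in your proposal.

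The treatment of the interior and of the flat boundary faces also does not work as described. ``Local maximum modulus pushes interior points to the Shilov boundary'' is a misuse: the maximal ideal space is (the graph over) $\overline{\mathbb D}^2$ and the Shilov boundary is not a priori contained in $b\mathbb D^2$ (if the conclusion holds it is everything); what is needed at interior points is a local density statement, and this forces one to analyze the analytic set $\{H_2=0\}$ where the graph fails to be totally real and to use pluriharmonicity to show (Proposition~\ref{holodisk}) that degeneracy along a positive-dimensional analytic set produces a holomorphic disk -- this dimension-reduction and stratification step is entirely absent from your plan. On the face $\overline{\mathbb D}\times S^1$ the analytic disks are $\mathbb D\times\{s\}$, not $\{z_1\}\times\overline{\mathbb D}$, and the fiberwise Wermer alternative does not by itself give approximation on the face: even granting that each fiber algebra is all of $\mathcal C$ of the fiber, gluing requires something like Proposition~\ref{easybishop}, and the genuinely hard point -- the structure of the set where all $\partial h_j/\partial\bar z$ vanish on the face, which the paper organizes into a one-dimensional $\mathcal C^1$ curve plus a thin set so that the bottom stratum satisfies $\mathcal C=\hol$ (Lemmas~\ref{cover} and~\ref{known}) -- is precisely what you flag as ``the main obstacle'' without supplying an argument. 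Note finally that a single boundary fiber on which all $h_j$ are holomorphic already realizes the first alternative of the theorem, so no positive-measure family of such fibers is needed; the real work is in the opposite direction, namely converting non-holomorphy into density, and that is what is missing.
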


\begin{theorem}\label{maximality}
Let $f_j\in\mathcal C(\Gamma^2), j=1,...,N$, $N\geq 1$, and 
assume that each $f_j$ extends to a pluriharmonic function on $\mathbb D^2$.  
Then either 
$$
[z_1,z_2,f_1,...,f_N]_{\Gamma^2}=\mathcal C(\Gamma^2),
$$ 
or there exists a nontrivial closed algebraic variety $Z\subset\overline{\mathbb D^2}\setminus\Gamma^2$ 
with $\overline Z\setminus Z\subset\Gamma^2$, and the pluriharmonic extensions
of the $f_j$-s are holomorphic on $Z$.  
\end{theorem}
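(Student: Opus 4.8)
The plan is to pass to the maximal ideal space $M_B$ of $B:=[z_1,z_2,f_1,\dots,f_N]_{\Gamma^2}$, produce a nonconstant analytic disc in $M_B$ attached to $\Gamma^2$ along its boundary, and recognise its hull as the variety $Z$. Write $\tilde f_j=g_j+\bar k_j$ with $g_j,k_j\in\mathcal O(\mathbb D^2)$ for the pluriharmonic extensions; as these are Poisson integrals of continuous torus data, $\tilde f_j\in\mathcal C(\overline{\mathbb D^2})$. I would first prove the theorem assuming the $f_j$ smooth (pluriharmonic near $\overline{\mathbb D^2}$), so that $g_j,k_j$ lie in the bidisc algebra $A(\mathbb D^2)$ and hence in $B$; the general case follows by dilating, $f_j^{(r)}(z):=\tilde f_j(rz)$. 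By Stone--Weierstrass, $B=\mathcal C(\Gamma^2)$ precisely when $\bar z_1,\bar z_2\in B$, equivalently when $z_1$ and $z_2$ are both invertible in $B$, equivalently when the Gelfand transforms $z_1,z_2$ both avoid $0$ on $M_B$. So assume there is $\chi_0\in M_B$ with $\chi_0(z_1)=0$ (the case $\chi_0(z_2)=0$ being symmetric). Since $A(\mathbb D^2)|_{\Gamma^2}\subset B$ and $\tfrac14(1+\bar w_1z_1)(1+\bar w_2z_2)$ peaks at $(w_1,w_2)$, the Shilov boundary of $B$ is $\Gamma^2$, on which $|z_1|\equiv1$.

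As $z_1$ fails to be invertible while $|z_1|\equiv1$ on $\Gamma^2=\partial B$, the structure theory of uniform algebras containing the disc algebra (Rossi's local maximum modulus principle in the fibres of $z_1\colon M_B\to\overline{\mathbb D}$) produces a nonconstant holomorphic disc $\delta\colon\mathbb D\to M_B$ with $z_1\circ\delta=\mathrm{id}$, $\delta(0)=\chi_0$, whose frontier lies in $\Gamma^2$; alternatively, Theorem~\ref{polydisk} applied to the smooth $f_j$ also supplies such a disc, its alternative (b) being excluded since $B\ne\mathcal C(\Gamma^2)$ (restricting uniform approximations from $\overline{\mathbb D^2}$ to $\Gamma^2$ would give $B=\mathcal C(\Gamma^2)$). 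Put $\phi:=z_2\circ\delta$ and $\psi_j:=f_j\circ\delta$, both bounded holomorphic. Because $g_j,k_j\in A(\mathbb D^2)\subset B$ and $A(\mathbb D^2)|_{\Gamma^2}$ is generated by $z_1,z_2$, every character restricts to evaluation on these functions: $\chi(g_j)=g_j(z_1(\chi),z_2(\chi))$ and $\chi(k_j)=k_j(z_1(\chi),z_2(\chi))$. Hence $\beta_j:=\psi_j-g_j(\zeta,\phi(\zeta))$ is bounded and holomorphic on $\mathbb D$, equals $\chi_{\delta(\zeta)}(\bar k_j)$, and---since the frontier of $\delta(\mathbb D)$ lies in $\Gamma^2$---has nontangential boundary values $\overline{k_j(\zeta,\phi^{*}(\zeta))}$ a.e. The function $\zeta\mapsto\overline{k_j(\zeta,\phi(\zeta))}$ is bounded, antiholomorphic, and has the same boundary values; comparing Fourier coefficients forces both to be constant, so $k_j$ is constant along $\{(\zeta,\phi(\zeta)):\zeta\in\mathbb D\}$, i.e.\ all $\tilde f_j$ are holomorphic there.

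What remains---and where I expect the genuine difficulty to lie---is to close this disc up to an \emph{algebraic} variety. First, $\phi$ must be inner: the frontier of $\delta(\mathbb D)$ being in $\Gamma^2$ gives $|\phi(\zeta)|\to1$ nontangentially, so $|\phi^{*}|=1$ a.e. (and if $\phi$ is a unimodular constant one is already done). Second, and harder, $\phi$ must be a \emph{finite} Blaschke product: continuity of the data $f_j$ should exclude singular inner factors, and a degree bound should come from properness of $z_1$ on the closure of the analytic part of $M_B$, which is compact with frontier in $\Gamma^2$---this is exactly the point where the theory of distinguished varieties is needed. Granting this, $Z:=\overline{\{(\zeta,\phi(\zeta)):\zeta\in\mathbb D\}}\cap(\overline{\mathbb D^2}\setminus\Gamma^2)$ is a nontrivial closed algebraic variety with $\overline Z\setminus Z=\{(\zeta,\phi(\zeta)):|\zeta|=1\}\subset\Gamma^2$ on which all $\tilde f_j$ are holomorphic, completing the smooth case. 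For general continuous $f_j$, applying this to the dilations $f_j^{(r)}$ gives, for each $r$ near $1$, either $[z_1,z_2,f_1^{(r)},\dots,f_N^{(r)}]_{\Gamma^2}=\mathcal C(\Gamma^2)$ or a distinguished algebraic variety $Z_r$ carrying the $\tilde f_j^{(r)}$; passing to a subsequence $r_k\uparrow1$ one extracts a limiting variety $Z$ by a normal-families argument with a degree bound uniform in $r$---unless the first alternative holds for all large $k$, in which case one must deduce $B=\mathcal C(\Gamma^2)$ separately. Since forming a generated uniform algebra is highly discontinuous in the generators, this last step cannot be done by a crude perturbation estimate; it, together with the degree bound, is the crux.
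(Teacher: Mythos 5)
Your proposal hinges on a step that is not a theorem: from the mere non-invertibility of $z_1$ in $B$ you claim that ``Rossi's local maximum modulus principle in the fibres of $z_1$'' produces a nonconstant analytic disc $\delta\colon\mathbb D\to M_B$ with $z_1\circ\delta=\mathrm{id}$, $\delta(0)=\chi_0$ and frontier in $\Gamma^2$. Rossi's principle constrains where hull points can sit; it does not create analytic structure, and in general polynomial hulls (equivalently, maximal ideal spaces of this kind) carry no analytic discs at all (Stolzenberg's example). Producing analytic structure in $\widehat{\cG_h(\Gamma^2)}$ is precisely the content of the theorem, so this step begs the question. Your fallback does not repair it: Theorem \ref{polydisk}, applied when $B\neq\mathcal C(\Gamma^2)$, only yields \emph{some} germ of a holomorphic disc in $\overline{\mathbb D^2}$ on which all $h_j$ are holomorphic; it gives no disc lying in the hull, no disc on which $z_1$ is a coordinate, and above all no attachment of its closure to $\Gamma^2$ --- and upgrading ``a local disc of holomorphy somewhere'' to ``a global variety with $\overline Z\setminus Z\subset\Gamma^2$'' is exactly the hard part. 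In addition, your reduction to the smooth case by dilation, and the subsequent normal-families limit of varieties $Z_r$ with a uniform degree bound (plus the case where the first alternative holds for all $r$ near $1$), are left open; as you yourself note, generated uniform algebras do not behave continuously in the generators, so this is a second genuine gap, not a routine verification.

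The paper's route avoids both problems and you may find it instructive to compare. It never seeks analytic discs in $M_B$: instead it works with the polynomial hull of the graph $\cG_h(\Gamma^2)$ of the pluriharmonic extensions. First, Lemma \ref{diskinbd} shows (via Wermer's polynomially convex disks result) that either a boundary disc of $b\mathbb D^2\setminus\Gamma^2$ already gives the conclusion, or the hull avoids $\cG_h(b\mathbb D^2\setminus\Gamma^2)$. Next, polynomial convexity of $\cG_h(\overline{\mathbb D^2})$ (Lemma \ref{pcgraph}) together with the approximation theorem on stratified totally real sets (Theorem \ref{approx}) squeezes the hull into $\cG_h(\Gamma^2\cup\tilde Z)$, where $\tilde Z$ is the analytic set on which all $\debar h_{i_1}\wedge\debar h_{i_2}$ vanish. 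Then a propagation argument (Rossi's principle on small transversal slices plus Wermer's maximality theorem in Lemma \ref{jacnotzero}, and a level-set/leaf argument with the exhaustion by the sets $Q_j$ when $\tilde Z=\mathbb D^2$, which also disposes of the failure of the $u_j$ to extend continuously, making your dilation unnecessary) shows that whole components, respectively leaves, through a hull point lie in the hull with all $h_j$ holomorphic along them; attachment to $\Gamma^2$ then follows from the first step. Finally, the passage from an analytic to an \emph{algebraic} variety --- the ``crux'' you flag (inner functions, finite Blaschke products, degree bounds) --- is in the paper a one-line appeal to Tornehave's theorem (Corollary 3.8.11 in Stout), so no degree bound or distinguished-variety machinery is needed once an analytic $Z$ attached to $\Gamma^2$ is found.
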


(We will give an example (Example \ref{example}) with $N=1$, where no such variety can exist, \emph{i.e.}, 
$\mathcal C(\Gamma^2)=[z_1,z_2,f]$.) \

\begin{theorem}\label{domain}
Let $\Omega\subset\mathbb C^n$ be a polynomially convex $\mathcal C^1$-smooth domain.  Let 
$h_j\in PH(\Omega)\cap\mathcal C(\overline\Omega)$ for $j=1,...,N$, 
and assume that $\mathbb [z_1,...,z_n,h_1,...,h_N]_{b\Omega}=\mathcal C(b\Omega)$.  
Then either there exists a holomorphic disk in $\Omega$ where 
all $h_j$-s are holomorphic, or $[z_1,...,z_n,h_1,...,h_N]_{\overline{\Omega}}=\mathcal C(\overline\Omega)$.
\end{theorem}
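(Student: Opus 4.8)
The plan is to deduce Theorem~\ref{domain} from the local obstruction theory together with the two preliminary results in $\C^2$, by a standard argument on the structure of the maximal ideal space and the Šilov boundary. Write $\cA=[z_1,\dots,z_n,h_1,\dots,h_N]_{\overline\Omega}$. Since $\Omega$ is polynomially convex and $\mathcal C^1$-smooth, the hypothesis $\cA|_{b\Omega}=\mathcal C(b\Omega)$ says that $b\Omega$ is already the Šilov boundary of $\cA$ and carries the full algebra of continuous functions; thus $\cA\ne\mathcal C(\overline\Omega)$ forces the maximal ideal space $M_\cA$ to be strictly bigger than $\overline\Omega$, \emph{or} at least forces a nontrivial Gleason part / analytic structure inside $\overline\Omega$. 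First I would invoke the Bishop/Aupetit--Wermer--type machinery: if $\cA\ne\mathcal C(\overline\Omega)$, then since $\cA$ contains $z_1,\dots,z_n$ and separates points of $\overline\Omega$, the point derivations and the local maximum principle on $\overline\Omega\setminus b\Omega$ produce a nonempty set $E\subset\overline\Omega$ where $\cA$ fails to be local; the goal is to upgrade $E$ to an honest holomorphic disk on which all the $h_j$ are holomorphic.

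The key step is a localization/reduction to dimension two. Given a putative bad point $p\in\Omega$, I would slice $\Omega$ by generic complex $2$-planes (or more flexibly by analytic disks through $p$) and show that the restriction of $\cA$ to such a slice $\Omega\cap L$ is again a uniform algebra generated by coordinates and pluriharmonic functions, to which Theorem~\ref{polydisk} applies after a biholomorphic normalization of the slice to (a domain exhausted by copies of) the bidisk — here one uses that pluriharmonicity is preserved under composition with holomorphic maps, and that $\mathcal C^1$ boundary smoothness passes to generic slices. If on a dense set of slices through $p$ all $h_j$ are holomorphic, one assembles these one-dimensional analytic sets into a genuine holomorphic disk through $p$ in $\overline\Omega$ on which every $h_j$ is holomorphic, giving the first alternative. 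If instead, for every relevant slice, the slice-algebra is all of $\mathcal C$ of the slice, then I would glue: the set of $\zeta\in\overline\Omega$ that are peak points for $\cA$ is closed under such slicing arguments, one shows it is both open and closed in $\overline\Omega$ and contains $b\Omega$ by hypothesis, hence equals $\overline\Omega$, and then a Bishop peak-point criterion (every point a peak point $\Rightarrow$ $\cA=\mathcal C$) gives $\cA=\mathcal C(\overline\Omega)$.

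The main obstacle I anticipate is the gluing/analytic-continuation step: producing a single embedded holomorphic disk in $\overline\Omega$ from the family of one-dimensional obstructions coming from the two-dimensional slices. The slices only give, a priori, that for each $p$ in the bad set there is \emph{some} analytic disk germ through $p$ along which the $h_j$ are holomorphic, and one must show these germs are compatible and extend to a disk that stays in $\overline\Omega$ — this requires controlling the Cauchy--Riemann equations $\overline\partial h_j$ on the bad set and invoking a uniqueness/identity principle for pluriharmonic functions restricted to varieties, plus the boundary behavior near $b\Omega$ to ensure the disk does not escape. A secondary technical point is verifying that a generic $2$-plane section of a polynomially convex $\mathcal C^1$ domain is again polynomially convex with $\mathcal C^1$ boundary and that polynomial convexity of the ambient domain transfers to the graph-type hulls needed to apply Theorem~\ref{polydisk}; I would handle this with a transversality/Sard argument together with the fact that the polynomially convex hull of a compact subset of $b\Omega$ is controlled. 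Once the two-dimensional case and these transfer lemmas are in hand, the peak-point dichotomy closes the argument.
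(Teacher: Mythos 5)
Your plan breaks down at its two load-bearing steps. The endgame rests on ``every point of $\overline\Omega$ a peak point $\Rightarrow \cA=\mathcal C(\overline\Omega)$'', which is the peak point conjecture and is false for general uniform algebras (Cole's counterexample); nothing in your setup (generators merely continuous up to $b\Omega$ and pluriharmonic inside) places you in one of the special classes where a peak-point criterion is actually a theorem, and the asserted open-and-closedness of the peak point set comes with no mechanism at all. The same problem infects the opening move: failure of $\cA=\mathcal C(\overline\Omega)$ does not, by point derivations, Gleason parts, or any abstract spectral argument, produce analytic structure, let alone an ``honest holomorphic disk'' on which the $h_j$ are holomorphic (Stolzenberg-type hulls without analytic structure rule this out); some concrete use of pluriharmonicity is indispensable to get the first alternative. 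The slicing reduction is also not viable as stated: Theorem \ref{polydisk} is a statement about the closed bidisk with its very particular boundary structure, and a $2$-plane section of $\Omega$ is not biholomorphic to $\overline{\mathbb D^2}$ (nor does an ``exhaustion by bidisks'' or a normalization preserve polynomial convexity and the boundary data you need); the correct two-dimensional statement for a smooth slice is Theorem \ref{domain} itself with $n=2$, so the reduction is circular. Even granting full density on every slice, you have no valid gluing: the natural tool, Proposition \ref{easybishop}(ii), requires the algebra on the image of the projection to be all of $\mathcal C$ of the image, which fails here because the projection of $\overline\Omega$ to $\C^{n-2}$ has interior.

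For comparison, the paper's proof uses neither slicing nor Banach-algebra machinery. It works on the graph: $\cG_h(\overline\Omega)$ is polynomially convex by Lemma \ref{pcgraph} (this is where pluriharmonicity plus approximation of plurisubharmonic functions enters), and $\overline\Omega$ is stratified by the analytic sets where the wedges $\debar h_{i_1}\wedge\cdots\wedge\debar h_{i_k}$ vanish. Proposition \ref{holodisk} is the concrete disk-producing mechanism you are missing: if all such $k$-fold wedges vanish identically on a $k$-dimensional stratum, writing $h_1=\mathfrak{Re}\,g+f$ and descending to a level set of $g$ produces, by induction, a holomorphic disk on which every $h_j$ is holomorphic. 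Hence, under the ``no disk'' assumption the dimensions of the strata strictly drop, off the degeneracy sets the graph is totally real by Corollary \ref{dbarCor}, the hypothesis $[z_1,\dots,z_n,h_1,\dots,h_N]_{b\Omega}=\mathcal C(b\Omega)$ gives density of $\hol(X_0)$ in $\mathcal C(X_0)$ on the bottom stratum, and Theorem \ref{approx} (Carleman-type approximation on stratified totally real sets attached to a compact set) yields $[z_1,\dots,z_n,h_1,\dots,h_N]_{\overline\Omega}=\mathcal C(\overline\Omega)$. If you want to rescue your outline, replace the peak-point and slicing steps by exactly these two ingredients: the wedge-vanishing stratification with Proposition \ref{holodisk}, and the stratified totally real approximation theorem.
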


We remark that if $\Omega$ was strictly pseudoconvex, we would get the same result with the 
algebra $\mathcal A(\overline\Omega)[h_1,...,h_N]$ instead.  \

In the one-variable case, Theorem \ref{polydisk} is due to  \v{C}irca \cite{Circa} (see also 
Axler-Shields \cite{AxlerShields}). 
Theorems \ref{polydisk} and \ref{domain}  generalize work by A. Izzo \cite{Izzo1993} and Weinstock \cite{Weinstock2}.
Theorem \ref{maximality}
generalizes Wermer's maximality theorem to $\mathbb C^2$ to the effect that  analyticity 
is the only obstruction to the full algebra being generated.  \

Assume in addition to the conditions in Theorem \ref{domain} that $\Omega$ is strictly pseudoconvex and
that  $h_j\in\mathcal C^1(\overline\Omega)$ 
for $j=1,...,N$, and define 
$$
S:=\{z\in\overline\Omega: \overline\partial h_{i_1}\wedge\cdot\cdot\cdot\wedge\overline\partial h_{i_n}(z)=0
\mbox{ for all } 1\leq i_j\leq N\}.
$$
Izzo's result is that if $S\cap\Omega=\emptyset$ and $N=n$ then $[z_1,...,z_n,h_1,...,h_n]_{\overline{\Omega}}=\mathcal C(\overline\Omega)$.  In this case $S\subset b\Omega$ is a peak interpolation set by Weinstock \cite{Weinstock}, and 
by the assumption on the wedge products we have that $\mathcal G_h(\overline\Omega\setminus S)$
is totally real.  The pluriharmonicity of the $h_j$-s guaranties that $\mathcal{G}_h(\overline\Omega)$
is polynomially convex, hence the conclusion follows from Theorem \ref{approx} below. \

According to Theorem \ref{domain} there  is no need in general to assume that $S\cap\Omega=\emptyset$.  
For a generic choice of $h_j$-s (as long as $N\geq n$) we have that $S\cap\Omega\neq\emptyset$
will not prevent the full algebra from being generated, but in exceptional cases the presence 
of a nontrivial analytic set $Z\subset\Omega$ with all $h_j$-s analytic along $Z$ will be an obstruction. 
We have computational criteria for detecting such a set.  For approximation 
of continuous functions on $\mathcal G_h(\overline\Omega)$ it is then 
necessary and sufficient to assume that the function sought approximating
is approximable on $\mathcal G_h(Z\cup b\Omega)$ (as above, the boundary might 
be covered by other existing results).  \

\section{Proof of a Theorem of A. Izzo}

Recently A. Izzo \cite{Izzo2010} proved a conjecture of Freeman \cite{Freeman}
regarding uniform algebras on manifolds. 
At the core of our proofs of the theorems in our introduction is an approximation result due to P. E. Manne \cite{Manne} 
concerning $\mathcal C^1$-approximation by holomorphic functions on totally 
real sets attached to compact sets (cf. \cite{ManneWoldOvrelid}). 
We will demonstrate its strength by giving 
a very short proof of the result of Izzo.  Whereas 
Izzo uses the Arens-Calder\'{o}n lemma to utilize techniques of Weinstock \cite{Weinstock2}
(cf. Berndtsson \cite{Berndtsson}), we will use it to utilize techniques of Manne \cite{Manne}
(cf. Manne, Wold and \O vrelid \cite{ManneWoldOvrelid}).

\begin{theorem}(A. Izzo)\label{izzo}
Let $M$ be an $m$-dimensional $\mathcal C^1$-manifold-with-boundary, 
and let $X$ be a compact subset of $M$.  Let $A$ be 
a uniform algebra on $X$ generated by a family $\Phi$
of complex valued functions $\mathcal C^1$ on $M$, assume that 
the maximal ideal space of $A$ is $X$, and let
$$
E:=\{p\in X:df_1\wedge\cdot\cdot\cdot\wedge df_m(p)=0 \mbox{ for all } f_1,..,f_m\in\Phi\}.
$$
Then $A=\{g\in\mathcal C(X): g|_E\in A|_E\}$.
\end{theorem}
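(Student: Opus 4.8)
The strategy is to reduce the theorem to a local statement about $\mathcal{C}^1$-approximation on totally real sets attached to $X$, which is exactly what Manne's theorem delivers. First I would verify that $A$ is \emph{local}, i.e., if $g \in \mathcal{C}(X)$ and every point of $X$ has a neighborhood $U$ such that $g|_{X \cap U}$ is uniformly approximable on $X \cap U$ by elements of $A$, then $g \in A$. This is the Arens--Calder\'on lemma together with the hypothesis that the maximal ideal space of $A$ is $X$: one partitions $X$ by the (closed) fibers, notes that locality statements glue via a partition of unity argument applied to $A$ viewed as a module over itself, and uses that $A$ separates points of $X$ and contains constants. Given this, it suffices to show: for each $g$ in the putative right-hand side and each $p \in X$, the germ of $g$ at $p$ is approximable.

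Next I would fix $p \in X$ and split into two cases. If $p \notin E$, then there exist $f_1, \dots, f_m \in \Phi$ with $df_1 \wedge \cdots \wedge df_m(p) \neq 0$, so the map $f = (f_1, \dots, f_m) \colon M \to \mathbb{C}^m$ is an immersion near $p$; shrinking to a compact neighborhood $K \ni p$ in $X$, the image $f(K) \subset \mathbb{C}^m$ is a compact subset of a totally real $\mathcal{C}^1$-submanifold (a graph over an $m$-real-dimensional piece), and $f$ is a homeomorphism onto its image. Here I invoke Manne's approximation theorem: any $\mathcal{C}^1$ function on a totally real $\mathcal{C}^1$ set attached to a compact polynomially (or rational-hull) convex set is uniformly approximable by holomorphic functions — equivalently, on $f(K)$ every continuous function is approximable by polynomials in the ambient coordinates, hence by elements of $A$ after pulling back along $f$. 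One must be mildly careful that $f(K)$ has the right convexity so that Manne applies; this follows because $f(K)$ sits inside the joint spectrum, which is $f(X)$ (the maximal ideal space being $X$), and totally real manifolds are locally polynomially convex. Thus $g$ is approximable near every $p \notin E$.

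If $p \in E$, the argument above fails, but here we are rescued by the hypothesis itself: we only need to approximate functions $g$ with $g|_E \in A|_E$. The plan is to use a peak-point / Bishop-type localization: since $E$ is the common critical set, one shows $E$ behaves like a "boundary" piece for $A$, and any $g$ agreeing with an element of $A$ on $E$ can be corrected by subtracting that element, reducing to the case $g|_E = 0$; then one approximates $g$ by functions vanishing on (a neighborhood of) $E$ using an Urysohn-type cutoff supported away from $E$, and applies the totally real approximation from the previous paragraph on the complement where we already have control. The gluing of the "near $E$" and "away from $E$" approximations back into a global element of $A$ is again via locality.

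\textbf{Main obstacle.} The delicate point is the behavior near $E$: establishing that the totally real approximation can be carried out \emph{uniformly up to $E$} and patched without losing membership in $A$, and in particular justifying the reduction "$g|_E \in A|_E \Rightarrow$ may assume $g|_E = 0$" together with the approximation of functions vanishing on $E$ by functions supported off $E$. This requires knowing that $E$ is a peak-interpolation-type set for $A$, or at least that $A|_E$ is closed and the restriction map behaves well — exactly the kind of structural input where Manne's theorem (applied on the totally real part $X \setminus E$, whose closure meets $E$) and the convexity hypothesis on the spectrum must be combined carefully. The rest — immersion charts, local polynomial convexity of totally real sets, the Arens--Calder\'on locality — is standard.
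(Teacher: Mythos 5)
The main gap is in your case $p\notin E$. Knowing that $F=(f_{i_1},\dots,f_{i_m})$ embeds a neighborhood of $p$ in $X$ onto a totally real set, and that continuous functions are locally approximable by holomorphic functions there, does not by itself produce elements of $A$: a pullback $\varphi\circ F$ lies in $A$ only if $\varphi$ is uniformly approximable by polynomials on the whole image $F(X)$ (equivalently, via the functional calculus, holomorphic on a neighborhood of the joint spectrum), and a bump function supported in the totally real part need not be so approximable — the polynomial hull of $F(X)$ is not controlled by local polynomial convexity of the totally real piece, and the hull may carry analytic structure over the support of your bump. This is precisely where the paper's work lies: the Arens--Calder\'on lemma is invoked not for a locality principle but to adjoin finitely many further elements $f_{n+1},\dots,f_{n+m}\in A$ so that, writing $\tilde F=(f_1,\dots,f_{n+m})$, the projection $\pi_n(\widehat{\tilde F(X)})$ lies in a prescribed neighborhood $\Omega$ of $F(X)$; then Manne's theorem is used in its refined form (the holomorphic approximants tend to zero on a collar $W$ around the boundary of the local chart), and a Cousin I problem with bounds on a Runge and Stein neighborhood glues these local approximants to functions holomorphic near the hull, which approximate $f\circ\pi_n$ on $\tilde F(X)$ and hence give membership in $A$ (this is Lemma \ref{projection}). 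Your proposal never bridges from approximation on $f(K)$ to a global element of $A$; the remark that ``totally real manifolds are locally polynomially convex'' does not close this gap.

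The opening reduction and the case $p\in E$ are also off. The locality principle you assert (``locally approximable at every point implies membership in $A$'') is false for general uniform algebras with maximal ideal space $X$ (Kallin's nonlocal uniform algebra), so it cannot simply be cited. The correct reduction, which the paper uses implicitly, is cheaper: show that every continuous function with compact support in a suitable neighborhood $D$ of any point $x\notin E$ belongs to $A$; then every measure annihilating $A$ is supported on $E$, and for $g\in\mathcal{C}(X)$ with $g|_E\in A|_E$ one gets $\int g\,d\mu=0$ for all such $\mu$, hence $g\in A$ by Hahn--Banach. In particular no approximation near $E$, no peak-interpolation property of $E$, no closedness of $A|_E$, and no reduction to $g|_E=0$ are needed — the issue you single out as the main obstacle dissolves, while the genuine obstacle (controlling the hull via Arens--Calder\'on and patching the Manne approximants by a Cousin argument) is the step your plan leaves unaddressed.
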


\begin{proof}
It is enough to show that for any point $x\notin E$ there 
exists an open neighborhood $D$ of $x$ such 
that any continuous function on $X$ with compact support in $D$
is in $A$.  By definition(see Lemma \ref{totreellLemma}) there exist $f_1,...,f_n\in A$
and a neighborhood $D'$ of $x$ such that $F(D'\cap X)$
is a totally real set, where $F=(f_1,...,f_n)$.   
Since $A$ needs to separate points of $X$  (since 
the maximal ideal space of $A$ is $X$) we may, 
by possibly having to add more functions, assume that 
$F^{-1}(F(X)\cap F(D'\cap X))=D'\cap X$.  Let 
$\Omega$ be a neighborhood of $X_0=F(X)$ as 
in Lemma \ref{projection}.   Since the maximal 
ideal space of $A$ is $X$ it follows from the 
Arens-Calder\'{o}n lemma (see \emph{e.g.} \cite{Gamelin})
that there exist functions $f_{n+1},...,f_{n+m}\in A$
such that, writing $\tilde F=(f_1,...,f_n,f_{n+1},...,f_{n+m})$, 
we have that $\pi_n(\widehat{\tilde F(X)})\subset\Omega$.
The result now follows from Lemma \ref{projection}.
\end{proof}

\begin{lemma}\label{projection}
Let $X_0\subset\mathbb C^n$ be a compact set and assume that 
$z_0\in X_0$ is a totally real point, \emph{i.e.}, $X_0$ is a 
totally real set near $z_0$.  Then there exists a neighborhood 
$\Omega$ of $X_0$ and a neighborhood $U'$ of 
$z_0$ such that  the following holds: Let $\pi_n:\mathbb C^n\times\mathbb C^m\rightarrow\mathbb C^n$
denote the projection onto the first $n$ coordinates, let $X_1\subset\mathbb C^{n+m}$
be compact with $\pi_n(X_1)=X_0$, and assume that $\pi_n(\widehat X_1)\subset\Omega$. 
Then for any $f\in\mathcal C_0(U'\cap X_0)$ we have that
$$
(f\circ\pi_n)|_{X_1}\in [z_1,...,z_{n+m}]_{X_1}.
$$
\end{lemma}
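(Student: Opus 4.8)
The plan is to reduce the statement to a known $\mathcal C^1$-approximation theorem on totally real sets, exploiting that polynomial hulls project downward and that totally real behaviour is stable under such projections near the good point $z_0$. First I would choose the neighbourhoods: since $X_0$ is totally real near $z_0$, pick a relatively compact neighbourhood $U$ of $z_0$ in $\mathbb C^n$ with $X_0\cap \overline U$ totally real, and then let $U'\Subset U$ be a slightly smaller neighbourhood of $z_0$. The key point in the construction of $\Omega$ is that it must be chosen so narrow that the hypothesis $\pi_n(\widehat{X_1})\subset\Omega$ forces $\widehat{X_1}$ to stay, over $U'$, inside a region where the relevant graph-type set is totally real; concretely, over $\overline{U'}$ the set $\pi_n(\widehat{X_1})$ lies in $X_0\cap\overline U$, which is totally real, and a totally real set remains totally real under the (finite-to-one, but locally graph-like on the hull over the totally real locus) projection $\pi_n$. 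I would make this precise by noting that over the totally real part of $X_0$ the fibres of $\pi_n|_{\widehat{X_1}}$ are polynomially convex subsets of $\mathbb C^m$ on which every holomorphic function is uniformly approximable, so $\widehat{X_1}$ has no analytic structure there; shrinking $\Omega$ guarantees this region contains $\pi_n^{-1}(U')\cap\widehat{X_1}$.

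Next I would invoke the Manne-type approximation result that underlies the whole paper: given a compact set $K=\widehat{X_1}$ and a totally real set $T$ attached to it (here $T=\widehat{X_1}\cap\pi_n^{-1}(\overline{U'})$, which is totally real over $U'$ by the previous paragraph and is attached to the remainder of $\widehat{X_1}$), any $\mathcal C^1$ function on $T$ with compact support in the attached totally real piece can be approximated uniformly on $K$ by functions holomorphic near $K$, hence — since $\widehat{X_1}$ is polynomially convex — by polynomials in $z_1,\dots,z_{n+m}$. I would apply this to the function $f\circ\pi_n$, restricted to $\widehat{X_1}$: for $f\in\mathcal C_0(U'\cap X_0)$, the pullback $f\circ\pi_n$ is continuous on $\widehat{X_1}$ and supported inside $\pi_n^{-1}(U')$, i.e. inside the totally real attached piece. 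One mild technical wrinkle: $f$ is only assumed continuous, not $\mathcal C^1$, so I would first approximate $f$ uniformly by $\mathcal C^1$ (indeed smooth) functions with compact support in $U'\cap X_0$ — possible since $U'\cap X_0$ is, after the shrinking, a totally real $\mathcal C^1$ set on which smooth functions are dense in $\mathcal C_0$ — and then push those through the Manne approximation. Passing to the limit and restricting to $X_1\subset\widehat{X_1}$ yields $(f\circ\pi_n)|_{X_1}\in[z_1,\dots,z_{n+m}]_{X_1}$.

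The main obstacle I expect is the geometric bookkeeping in the first paragraph: producing a single neighbourhood $\Omega$ of $X_0$, depending only on $X_0$ and $z_0$ (not on $X_1$), such that every competitor $X_1$ with $\pi_n(\widehat{X_1})\subset\Omega$ automatically has its hull totally real over $U'$ and with the fibres controlled. The subtlety is that $\widehat{X_1}$ can a priori be large in the $\mathbb C^m$ directions; one must argue that over the totally real locus of $X_0$ no analytic disks can hide in $\widehat{X_1}$ — for instance via a local plurisubharmonic / peak-function argument showing that over a totally real base the hull projects properly and the total set $\widehat{X_1}\cap\pi_n^{-1}(\overline{U'})$ is itself totally real, so that Manne's theorem is applicable with no interior analytic obstruction near $z_0$. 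Once that localization is in hand, the approximation step is essentially a citation.
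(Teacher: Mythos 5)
Your plan hinges on the claim that, after shrinking $\Omega$, the set $\widehat{X_1}\cap\pi_n^{-1}(\overline{U'})$ (or at least $X_1$ over $U'$) is totally real, so that a Manne-type theorem for ``totally real sets attached to a compact set'' can be applied directly upstairs to $f\circ\pi_n$. This is false, and no choice of $\Omega$ can repair it, because the hypothesis $\pi_n(\widehat{X_1})\subset\Omega$ only constrains the \emph{projection} of the hull, not its fibers. Concretely, take $X_0$ a cube in $\R^n\subset\C^n$ (totally real and polynomially convex), $m=1$, and $X_1=X_0\times b\D$. Then $\pi_n(X_1)=X_0$ and $\widehat{X_1}\subset X_0\times\overline{\D}$, so $\pi_n(\widehat{X_1})=X_0\subset\Omega$ for \emph{every} neighborhood $\Omega$; yet $\widehat{X_1}\supset X_0\times\overline{\D}$ contains the analytic disks $\{x\}\times\D$ over every point of the totally real base, so the piece of the hull over $U'$ is nowhere near totally real. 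The intermediate assertion that polynomial convexity of the fibers of $\pi_n|_{\widehat{X_1}}$ rules out analytic structure is also a non sequitur: $\overline{\D}$ is polynomially convex. So the step ``invoke Manne's theorem on $T=\widehat{X_1}\cap\pi_n^{-1}(\overline{U'})$'' has no valid hypothesis to stand on, and the whole second paragraph collapses. (The conclusion of the lemma is of course still true, because $f\circ\pi_n$ is constant along the fibers -- but that is exactly the feature your argument never exploits.)

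The paper's proof uses the fiber-constancy instead of any total reality upstairs: Manne's proposition is applied only \emph{downstairs} on $X_0$ near $z_0$, in the refined form giving approximants $h_j\in\hol(U)$ with $\|h_j-f\|_{X_0\cap U}\to 0$ \emph{and} $\|h_j\|_W\to 0$ on a collar $W$ around $X_0\cap bU''$. The neighborhood $\Omega$ is then built as a union $U^1\cup U^2$ (roughly $\Omega\cap U''$ and $\Omega\setminus U''$ enlarged by $W$) whose overlap lies in $W$. The hypothesis $\pi_n(\widehat{X_1})\subset\Omega$ is used only to produce a Stein, Runge neighborhood $\tilde\Omega$ of $X_1$ inside $\pi_n^{-1}(\Omega)$; one then pulls back $h_j$ by $\pi_n$, and glues $h_j\circ\pi_n$ on $\pi_n^{-1}(U^1)\cap\tilde\Omega$ with $0$ on $\pi_n^{-1}(U^2)\cap\tilde\Omega$ by solving an additive Cousin problem with bounds (the data is small because $\|h_j\|_W\to 0$), obtaining global holomorphic functions on $\tilde\Omega$ converging to $f\circ\pi_n$ on $X_1$; Oka--Weil finishes. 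If you want to salvage your write-up, this patching mechanism (approximate on the base with smallness on a collar, pull back, correct by Cousin I with estimates) is the missing idea; the ``totally real hull over $U'$'' localization should be discarded.
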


\begin{proof}
We follow the proof of Proposition 3.10 in \cite{ManneWoldOvrelid}. 
Let $V$ be a neighborhood of $z_0$ such that $V\cap X_0$
is totally real.  
The following is the content of the proposition on page 522 in \cite{Manne}: There are neighborhoods
$$
U'\subset\subset U''\subset\subset U\subset\subset V
$$
of $z_0$, and a neighborhood $W\subset U$ of $X_0\cap bU''$ such that 
if $f\in\mathcal C(X_0\cap V)$ has compact support in $U'$ there 
exists a sequence of holomorphic functions $h_j\in\mathcal O(U)$
such that $\|h_j-f\|_{X_0\cap U}\rightarrow 0$ and $\|h_j\|_{W}\rightarrow 0$
uniformly as $j\rightarrow\infty$.  Let $\{\Omega_j\}$ 
be a neighborhood basis of $X_0$ and 
define 
$U_j^1=\Omega_j\cap U''$, $U_j^2=(\Omega_j\setminus U'')\cup (W\cap\Omega_j)$. 
If $j$ is large enough, we have that $U_j^2$ is 
open, and $\Omega_j=U_j^1\cup U_j^2$ and $U_j^1\cap U_j^2\subset W$.  
Fix a j large enough so that this holds and drop the subscript $j$.  \

Assume that $\pi_n(\widehat X_1)\subset\Omega$, and choose a Runge and 
Stein neighborhood $\tilde\Omega$ of $X_1$ with $\pi_n(\tilde\Omega)\subset\Omega$.
We solve a Cousin problem on $\tilde\Omega$ with respect
to the cover $\tilde U^1=\tilde\Omega\cap\pi_n^{-1}(U^1)$
and $\tilde U^2=\tilde\Omega\cap\pi_n^{-1}(U^2)$.  \

Let $\tilde h_j=h_j\circ\pi_n$.  By the solution of Cousin I with estimates 
there exist sequences $g_j^i\in\mathcal O(\tilde U^i)$ such that 
$\tilde h_j=g_j^1-g_j^2$ on $\tilde U^1\cap\tilde U^2$, and 
such that $g_j^i\rightarrow 0$ uniformly on compact subsets of $\tilde U^i$.  
The sequence $g_j$ defined as $h_j-g_j^1$ on $\tilde U^1$
and $-g_j^2$ on $\tilde U^2$ will converge to 
$f\circ\pi_n|_{X_1}$.  
\end{proof}

\begin{remark}
We remark that Izzo's somewhat more general Theorem 1.2. can be proved 
in a similar fashion.  
\end{remark}

\section{Preliminaries}

\subsection{Toally real manifolds}

\begin{lemma}\label{totreellLemma}
Let $M$ be a smooth manifold of real dimension $m$ and let $f=(f_1,\ldots,f_N)\colon M\to \C^N$
be a $\cC^1$-smooth map. If $x\in M$ and if there are $f_{i_1},\ldots,f_{i_m}$ such that 
\begin{equation*}
df_{i_1}\wedge \cdots \wedge df_{i_m}(x)\neq 0,
\end{equation*}
then the map $f$ is an embedding near $x$ and the image $T_{f(x)}f(M)$ is totally real. 
\end{lemma}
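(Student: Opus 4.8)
The plan is to prove this as a local statement, working in a coordinate chart near $x$ where $M$ looks like an open subset of $\R^m$, with $x$ mapped to the origin. The claim has two parts: that $f$ is an embedding near $x$, and that the tangent space to the image at $f(x)$ is totally real. The hypothesis $df_{i_1}\wedge\cdots\wedge df_{i_m}(x)\neq 0$ is a statement about complex-valued $1$-forms; the key is to translate it into a statement about the real derivative $Df(x)\colon \R^m\to\C^N\cong\R^{2N}$.

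First I would reduce to the $m$ functions $g:=(f_{i_1},\ldots,f_{i_m})\colon M\to\C^m$, since if $g$ is an embedding near $x$ then so is $f$ (adding more component functions only helps injectivity and immersivity), and if $T_{g(x)}g(M)$ is totally real as a subspace of $\C^m$ then $T_{f(x)}f(M)$ is totally real in $\C^N$ — indeed, any complex-linear relation among tangent vectors of $f(M)$ would project (under the linear projection $\C^N\to\C^m$ onto the chosen coordinates) to one among tangent vectors of $g(M)$. So it suffices to treat the square case $N=m$.

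Next, in local real coordinates $(x_1,\ldots,x_m)$ on $M$, write $g=(g_1,\ldots,g_m)$ with each $g_j$ complex-valued, and consider the complex Jacobian matrix $J=(\partial g_j/\partial x_k)(x)$, an $m\times m$ matrix with entries in $\C$. The hypothesis $dg_1\wedge\cdots\wedge dg_m(x)\neq 0$ is precisely the statement $\det J\neq 0$: expanding the wedge product of the $1$-forms $dg_j=\sum_k (\partial g_j/\partial x_k)\,dx_k$ in the basis $dx_1,\ldots,dx_m$ gives $\det(J)\,dx_1\wedge\cdots\wedge dx_m$. Now I would show that $\det_\C J\neq 0$ implies the real differential $Dg(x)\colon\R^m\to\C^m$ is injective with totally real image. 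For injectivity: if $v\in\R^m$ with $Dg(x)v=0$, then also $\det J\neq 0$ forces $v=0$ since $Dg(x)v = Jv$ viewing $v\in\R^m\subset\C^m$ and $J$ invertible over $\C$. For total reality: the image $L:=Jv\,(v\in\R^m)=J(\R^m)$; if $L$ contained a complex line, i.e. some nonzero $w\in L$ with $iw\in L$, write $w=Jv$, $iw=Jv'$ with $v,v'\in\R^m$; then $J(v'-iv)=0$, so $v'-iv=0$ in $\C^m$, forcing $v=v'=0$, contradiction. Hence $L\cap iL=\{0\}$, which (since $\dim_\R L=m$) is exactly total reality. Combined with the constant-rank / inverse function theorem argument — $Dg(x)$ injective plus $g$ being $\cC^1$ gives that $g$ is an immersion near $x$, hence a local embedding — this finishes the proof.

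The main obstacle, such as it is, is bookkeeping: being careful that "$df_{i_1}\wedge\cdots\wedge df_{i_m}\neq 0$" is interpreted as a $\C$-valued $m$-form on the real manifold (a section of $\Lambda^m T^*M\otimes\C$), so its nonvanishing really is the nonvanishing of a single complex determinant, and then keeping straight the distinction between $J$ acting $\C$-linearly on $\C^m$ versus its restriction to $\R^m$. Once that is set up correctly, each implication is a one-line linear-algebra argument. I would also remark that the passage from the square case back to $N\geq m$ deserves an explicit sentence, since total reality of a subspace is inherited by... wait — it is inherited under *injective* complex-linear maps and under taking preimages, not images; the correct statement is that $T_{f(x)}f(M)$ maps isomorphically onto $T_{g(x)}g(M)$ under the coordinate projection, and a subspace whose image under a complex-linear map is totally real is itself totally real, which is the direction we need.
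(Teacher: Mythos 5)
Your proposal is correct and is essentially the paper's own argument rendered in coordinates: the nonvanishing wedge says the complexified differential (your complex Jacobian $J$ of the selected $m$ functions) is injective over $\C$, and the same one-line kernel computation then gives both the immersion/embedding claim and $T_{f(x)}f(M)\cap iT_{f(x)}f(M)=0$, exactly as in the paper's invariant formulation. One caution on your closing remark: the statement ``a subspace whose image under a complex-linear map is totally real is itself totally real'' is false without injectivity on that subspace (the kernel could contribute a complex line), so keep it coupled to the isomorphism clause, which your $\det J\neq 0$ indeed supplies.
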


\begin{proof}
The first assertion is clear (actually the condition that the wedge product is non-vanishing is stronger than
the condition that $f$ is an embedding near $x$).
This means that the 
$\C$-linear map $df \colon T^{\C}_x M \to \C^N$ has maximal rank, where $T^{\C}_x M$ is the 
complexified tangent space of $M$ at $x$.
Hence, $T_{f(x)} f(M)\cap JT_{f(x)} f(M)=df (T_x M \cap JT_x M)=0$.
\end{proof}

\begin{corollary}\label{dbarCor}
Let $X$ be a complex manifold of dimension $n$, let $h=(h_1,\ldots,h_N)\colon X\to \C^N$ be a $\mathcal C^1$-smooth map,
and let $\cG_h(X)\subset X\times \C^N$ be the graph of $h$. If $x\in X$ and there are $h_{i_1},\ldots,h_{i_n}$ such that
\begin{equation*}
\debar h_{i_1}\wedge \cdots \wedge \debar h_{i_n} (x)\neq 0,
\end{equation*}
then the tangent space of $\cG_h(X)$ at $(x,h(x))$ is totally real.
\end{corollary}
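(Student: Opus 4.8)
The plan is to reduce Corollary \ref{dbarCor} to Lemma \ref{totreellLemma} by viewing the complex manifold $X$ as a real manifold $M$ of dimension $m=2n$ and applying the lemma to the map $f=(h_1,\ldots,h_N)\colon M\to\C^N$ together with a local coordinate chart. The point is that the graph $\cG_h(X)$ near $(x,h(x))$ is the image of a map that records both the ambient coordinates and the values of $h$; total reality of its tangent space should follow from the injectivity of the relevant complex-linear differential, exactly as in the proof of the lemma.

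First I would fix holomorphic local coordinates $z=(z_1,\ldots,z_n)$ on $X$ near $x$, so that locally $\cG_h(X)$ is the image of the $\cC^1$-map $\Phi=(z_1,\ldots,z_n,h_1,\ldots,h_N)\colon X\to\C^n\times\C^N$. The tangent space of $\cG_h(X)$ at $(x,h(x))$ is totally real precisely when $d\Phi\colon T_x^{\C}X\to\C^{n+N}$ (the complexified differential, equivalently the $\C$-linear extension acting on the holomorphic tangent vectors) has image meeting its conjugate only in $0$; equivalently, as in Lemma \ref{totreellLemma}, it suffices that $d\Phi$ restricted to the anti-holomorphic tangent space $T_x^{0,1}X$ is injective. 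Since the $z_j$ are holomorphic, $\debar z_j=0$, so this restriction is governed entirely by $\debar h_1,\ldots,\debar h_N$. The hypothesis $\debar h_{i_1}\wedge\cdots\wedge\debar h_{i_n}(x)\neq 0$ says exactly that these $n$ one-forms are linearly independent on the $n$-dimensional space $T_x^{0,1}X$, hence that the map $v\mapsto(\debar h_1(v),\ldots,\debar h_N(v))$ is injective there. Therefore $d\Phi|_{T_x^{0,1}X}$ is injective, and the tangent space of the graph is totally real.

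Alternatively, and perhaps more cleanly for the write-up, one applies Lemma \ref{totreellLemma} directly: take $M=X$ as a real $2n$-manifold with real coordinates coming from $\Re z_j,\Im z_j$, and let the map be $f=(z_1,\ldots,z_n,h_1,\ldots,h_N)$, which has $N+n$ complex components. Among its components we can always select $2n=m$ whose real differentials wedge to something nonzero at $x$: namely $z_1,\ldots,z_n$ contribute $2n$ real-linearly independent real $1$-forms $dx_j,dy_j$, but to get a nonvanishing $m$-fold wedge of the complex-valued type appearing in the lemma we instead use the complexified statement — the $\C$-linear map $df\colon T_x^{\C}X\to\C^{n+N}$ has maximal rank $2n$ because its first $n$ components already give the isomorphism $T_x^{\C}X\cong\C^n$ after identifying with $\partial_{z_j},\partial_{\bar z_j}$, and then the conclusion $T_{f(x)}f(M)\cap JT_{f(x)}f(M)=0$ is immediate. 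I should be slightly careful that the quantifier in Lemma \ref{totreellLemma} matches: the lemma wants $m$ of the $f_i$ with nonvanishing wedge, and here combining the coordinate functions with the $h_{i_k}$ witnessing the hypothesis does the job.

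The main obstacle I anticipate is purely bookkeeping: making precise the identification of $T_x^{\C}X$ with $T_x^{1,0}X\oplus T_x^{0,1}X$ and checking that ``$\debar h_{i_1}\wedge\cdots\wedge\debar h_{i_n}\neq 0$'' translates correctly into ``$d\Phi$ has maximal rank on the anti-holomorphic part,'' since the coordinate functions kill the anti-holomorphic directions under $\partial$ but the $h_j$ need not. Once the linear algebra of the $(1,0)$/$(0,1)$ splitting is set up, the argument is a one-line invocation of Lemma \ref{totreellLemma} (or a direct repeat of its proof), so there is no analytic difficulty — only the risk of a sign or conjugation slip in the identification of tangent spaces.
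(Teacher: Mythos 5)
Your first argument is in substance the paper's own proof: the paper applies Lemma \ref{totreellLemma} to the graph map with the $2n$ components $z_1,\ldots,z_n,h_{i_1},\ldots,h_{i_n}$ and verifies the nonvanishing wedge by the bidegree identity $dz\wedge dh_{i_1}\wedge\cdots\wedge dh_{i_n}=dz\wedge\debar h_{i_1}\wedge\cdots\wedge\debar h_{i_n}$, which is exactly your observation that the $dz_j$ take care of $T^{1,0}_xX$ while the hypothesis makes $\debar h_{i_1},\ldots,\debar h_{i_n}$ linearly independent on $T^{0,1}_xX$, so the complexified differential of $(z,h)$ is injective and the last line of the Lemma's proof gives $T\cap JT=0$. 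Same route, phrased in tangent vectors rather than in forms.

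Two slips should be fixed in the write-up. First, ``totally real precisely when the image of $d\Phi$ meets its conjugate only in $0$'' is not a correct characterization: for $n=N=1$ and $h(z)=\bar z$ the image of the complexified differential is all of $\C^2$, equal to its own conjugate, yet the graph is totally real. The criterion you actually use afterwards --- injectivity of the $\C$-linear extension of $d\Phi$ on $T^{\C}_xX$, which reduces to injectivity on $T^{0,1}_xX$ because the coordinate components kill any $(1,0)$-part of a kernel vector --- is the correct one and is precisely what Lemma \ref{totreellLemma} runs on, so this is a cosmetic error rather than a gap, but the sentence as stated is false. Second, in your ``alternative'' version the assertion that the first $n$ components already give maximal rank is wrong: $T^{\C}_xX$ has complex dimension $2n$, and $dz_1,\ldots,dz_n$ annihilate $T^{0,1}_xX$, so they have rank only $n$; if that claim were true the hypothesis on the $\debar h_{i_k}$ would be superfluous and every graph (for instance of a holomorphic map) would be totally real. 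You do correct course at the end by saying one must adjoin $h_{i_1},\ldots,h_{i_n}$ to the coordinates, but the step you leave unverified --- that the resulting $2n$-fold wedge is nonzero --- is exactly the one-line bidegree identity above, and it is worth writing out.
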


\begin{proof}
Consider the map $X\to X\times \C^N$ defined by $z\mapsto (z,h(z))$
and denote by $dz$ the form $dz_1\wedge \cdots \wedge dz_n$ where $z_j$ are local coordinates centered at $x$. 
For bidegree reasons
\begin{equation*}
dz \wedge d h_{i_1}\wedge \cdots \wedge d h_{i_n}=
dz \wedge \debar h_{i_1}\wedge \cdots \wedge \debar h_{i_n},
\end{equation*}
and by assumption the latter product is non-vanishing at $x$.
\end{proof}

\subsection{Pluriharmonic functions}\label{pharm}

Let $h=(h_1,\ldots,h_N)\colon \Omega \to \C^N$, $n\leq N$, be a pluriharmonic mapping.
Let $e_1,\ldots,e_N$ be a basis for $\C^N$. Consider $\{e_j\}$ as a frame for  
the trivial $\mbox{rank}\, N$-bundle $E\to \Omega$ and consider $h=h_1e_1+\cdots+h_Ne_N$ as a section of $E$. We let 
$H_1$ be the section of $T^*_{0,1}(\Omega)\wedge E\simeq T^*_{0,1}(\Omega)\otimes E$ defined by
\begin{equation}\label{H1}
H_1=\debar h=\debar h_1\wedge e_1+\cdots +\debar h_N\wedge e_N
\end{equation}
and we define $H_k$ as sections of $T^*_{0,k}(\Omega)\wedge \Lambda^k E$ by
\begin{equation}\label{Hk}
H_k=(H_1)^k/k!=H_1\wedge \cdots \wedge H_1/k!=\pm \sum'_{|I|=k} \debar h_I \wedge e_I,
\end{equation}
where $\sum'_I$ means that we sum over increasing multiindices
and $\debar h_I=\debar h_{I_1}\wedge \cdots \wedge \debar h_{I_k}$.
Since the $H_k$ are invariantly defined, this construction also makes sense for pluriharmonic mappings
from complex manifolds $X\to \C^N$. Moreover, $H_k$ is anti-holomorphic, or more formally, an 
anti-holomorphic $(0,k)$-form with values in $\Lambda^k E$. If $i\colon Y\hookrightarrow X$ is a $k$-dimensional 
complex submanifold, we will write $Y_H^k$ for the set of points of $Y$ where $i^*H_k$ vanishes;
if $\zeta_1,\ldots,\zeta_k$ are local coordinates on $Y$ this set coincides with the common zero set of the 
tuple of all $k\times k$-subdeterminants of the matrix
\begin{equation*}
\left( \begin{array}{ccc}
       \partial h_1/\partial \bar{\zeta}_1 & \cdots & \partial h_1/\partial \bar{\zeta}_k \\
       \vdots & & \vdots \\
       \partial h_N/\partial \bar{\zeta}_1 & \cdots & \partial h_N/\partial \bar{\zeta}_k \end{array}\right).
\end{equation*}
Hence, $Y^k_H$ is an analytic subset of $Y$.

\begin{proposition}\label{holodisk}
Let $X$ be a complex manifold of dimension $n$ and let $h=(h_1,\ldots,h_N) \colon X\to \C^N$, $N\geq n$, be a pluriharmonic mapping.
If there is a (germ of a) complex submanifold $Y\subset X$ of dimension $k$ such that 
$Y^k_H=Y$, then there is a (germ of a) holomorphic disk in $Y$ where all $h_j$ are holomorphic.  
\end{proposition}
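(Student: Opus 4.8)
The plan is to reduce the statement to a local question about a pluriharmonic map into $\C^N$ whose $\debar$ has everywhere rank strictly less than $k$, and then show that such a map must, after restricting to a suitable holomorphic disk, have all components holomorphic. Since the conclusion and hypotheses are local and invariant, I may work in a neighborhood of a point $x_0\in Y$ with local coordinates $\zeta=(\zeta_1,\ldots,\zeta_k)$ on $Y$; replacing $h_j$ by their restrictions $i^*h_j$, I am looking at pluriharmonic (in particular harmonic) functions $h_1,\ldots,h_N$ on a polydisk in $\C^k$ such that the $N\times k$ matrix $M(\zeta)=\bigl(\partial h_p/\partial\bar\zeta_q\bigr)$ has rank $\le k-1$ everywhere. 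Write $\phi_p=\debar h_p$, a $\debar$-closed (indeed anti-holomorphic) $(0,1)$-form; equivalently $\overline{\phi_p}$ is a holomorphic $(1,0)$-form. The rank condition says the anti-holomorphic vector-valued $(0,1)$-form $H_1=\sum\phi_p\otimes e_p$ has $H_k=(H_1)^k/k!\equiv 0$ on $Y$, i.e. every $k\times k$ minor of $M$ vanishes identically.

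Next I would exploit anti-holomorphicity. The functions $\partial h_p/\partial\bar\zeta_q$ are anti-holomorphic, so their complex conjugates $a_{pq}:=\overline{\partial h_p/\partial\bar\zeta_q}=\partial\bar h_p/\partial\zeta_q$ are holomorphic on the polydisk, and the holomorphic matrix $A(\zeta)=(a_{pq})$ has rank $\le k-1$ everywhere. Since $N\ge n\ge k$, we have at least $k$ columns; the key structural fact is that a holomorphic matrix of constant-corank-$\ge 1$ type (or even just rank $\le k-1$ on all of a polydisk) has, on a dense open set, a nontrivial holomorphic kernel field: on the open set $U$ where the rank is maximal, say equal to $r\le k-1$, we can choose $r$ columns spanning the column space locally and solve for the remaining $k-r\ge 1$ columns as holomorphic combinations, producing a nowhere-zero holomorphic vector field $v(\zeta)\in\C^k$ with $A(\zeta)v(\zeta)=0$ on $U$. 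Conjugating back, $M(\zeta)\overline{v(\zeta)}=0$, i.e. the anti-holomorphic direction field $\bar v$ annihilates all the $\debar h_p$: contracting, $\bar v\lrcorner\,\debar h_p=\sum_q \bar v_q\,\partial h_p/\partial\bar\zeta_q=0$ for every $p$.

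The final step is to integrate the anti-holomorphic field $\bar v$ to a holomorphic disk on which all $h_p$ become holomorphic. Since $v$ is a holomorphic vector field, its complex-conjugate $\bar v$ spans an anti-holomorphic line field; shrinking $U$ further so that $v$ is, after a holomorphic change of coordinates, the constant field $\partial/\partial\zeta_1$ (possible since $v$ is nowhere zero and holomorphic — straighten it by the holomorphic flow), the vanishing $\bar v\lrcorner\,\debar h_p=0$ becomes $\partial h_p/\partial\bar\zeta_1=0$ for all $p$. Then the disk $\Delta=\{\zeta_2=\cdots=\zeta_k=0\}$ (a small holomorphic disk through a point of $U$, and hence a holomorphic disk in $Y\subset X$) has the property that each $h_p|_\Delta$ is a harmonic function of $\zeta_1$ with $\partial/\partial\bar\zeta_1$-derivative zero, hence holomorphic on $\Delta$, which is exactly the asserted germ of a holomorphic disk. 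I expect the main obstacle to be the bookkeeping around the locus where the rank of $A$ drops further: one must argue on the (nonempty, open) maximal-rank stratum to get a \emph{holomorphic} kernel field rather than merely a measurable one, and check that ``$Y_H^k=Y$'' forces $r\le k-1$ on that stratum so that the kernel is genuinely nontrivial; but since $Y_H^k=Y$ means the rank is $\le k-1$ everywhere, including on the open maximal-rank set, this is automatic, and the straightening of the holomorphic field $v$ together with the observation that a harmonic function killed by $\partial/\partial\bar\zeta_1$ is holomorphic in that variable completes the argument.
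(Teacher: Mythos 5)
Your argument is correct, but it is genuinely different from the one in the paper. The paper proceeds by induction on the dimension: at a point where some $\debar h_1\neq 0$ it writes $h_1=\mathfrak{Re}\, g+f$ with $g,f$ holomorphic, restricts to the level set $\{g=g(x)\}$, observes that $h_1$ becomes holomorphic there, and uses the determinant factorization coming from $H_{k+1}\equiv 0$ to conclude that $H_k$ pulls back to zero on the level set, so the induction hypothesis applies. You instead argue directly on $Y$: pluriharmonicity makes the coefficients $\partial h_p/\partial\bar\zeta_q$ anti-holomorphic, so the conjugate matrix $A$ is holomorphic, and $Y^k_H=Y$ forces $\operatorname{rank} A\leq k-1$ everywhere; on the (open, nonempty) maximal-rank stratum you extract a nowhere-zero holomorphic kernel field $v$ by Cramer's rule, straighten it (or integrate it) holomorphically, and note that $\sum_q \bar v_q\,\partial h_p/\partial\bar\zeta_q=0$ means every $h_p$ is holomorphic along the resulting integral disk. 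The steps you flag as delicate are indeed the right ones and they all go through: the rank is locally constant on the maximal-rank stratum, so the kernel is a holomorphic subbundle there, and the flow-box theorem for holomorphic vector fields is standard. Your route is non-inductive and in fact yields a bit more local structure — an open subset of $Y$ foliated by holomorphic disks on which all $h_j$ are holomorphic — whereas the paper's induction is shorter to write and stays entirely within the elementary decomposition $h=\mathfrak{Re}\,g+f$ of pluriharmonic functions, producing the disk inside successively chosen level sets. Either argument suffices for how the proposition is used later, since only the existence of a germ of such a disk is needed.
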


\begin{proof}
It suffices to show that if $H_n$ vanishes identically, then $X$ contains a holomorphic disk where all $h_j$ are holomorphic.
We prove this by induction over $n$. If $n=1$, then the condition that $H_1$ vanishes identically precisely means that
all $h_j$ are holomorphic. Assume that the statement is true for all $n\leq k$ and all $N\geq n$. Let $X$ be $k+1$-dimensional
and assume that $h\colon X\to \C^N$, $N\geq k+1$, is pluriharmonic and that $H_{k+1}$ vanishes identically. 
We may assume that there is some $h_j$ which is not holomorphic on $X$. Assume for simplicity that $h_1$ is not holomorphic
and let $x\in X$ be a point such that $\debar h_1(x)\neq 0$.
In a neighborhood of $x$ we may write
\begin{equation*}
h_1=\mathfrak{Re}\, g+f,
\end{equation*}
where $g$ and $f$ are holomorphic. Since $\debar h_1(x)\neq 0$ it follows that $d g(x)\neq 0$. Let 
$Y=\{z;\, g(z)=g(x)\}$ be the level set of $g$ through $x$. Then $i\colon Y\hookrightarrow X$ is a complex 
$k$-dimensional submanifold through $x$. Choose local coordinates $(w_0,w_1,\ldots,w_k)=(w_0,w')$ centered at 
$x$ such that $Y=\{w_0=0\}$. Since $g$ is constant on $Y$ we have $i^*\debar h_1=i^*\overline\partial\mathfrak{Re}\, g=0$, i.e.,
$h_1$ is holomorphic on $Y$. Thus, since $\debar h_1(x)\neq 0$ it follows that 
\begin{equation*}
\frac{\partial h_1}{\partial \bar{w}_0}(x)\neq 0, \quad \frac{\partial h_1}{\partial \bar{w}_j}=0, \,\, j=1,\ldots,k,
 \,\,\, \mbox{on}\,\, Y.
\end{equation*}
Now, let $I\subset \{1,\ldots,N\}$, $|I|=k$. If $1\in I$, then it is obvious that 
$i^* \debar h_I=i^* \debar h_{I_1}\wedge \cdots \wedge \debar h_{I_k}=0$ so we may assume that 
$1\notin I$. Since $H_{k+1}$ vanishes identically we have
\begin{equation*}
0=\det \left( \begin{array}{cc}
              \partial h_1/\partial \bar{w}_0 & 0 \\
                * &  \partial h_I/\partial \bar{w}' \end{array} \right)
= \frac{\partial h_1}{\partial \bar{w}_0}\cdot \det \left( \frac{\partial h_I}{\partial \bar{w}'}\right)
\end{equation*}
on $Y$ and since $\partial h_1/\partial \bar{w}_0\neq 0$ close to $x$ it follows that
$i^* \debar h_I=0$. Hence, $i^*H_k$ vanishes identically in a neighborhood in $Y$ of $x$ and it follows from the 
induction hypothesis that $Y$ contains a holomorphic disk where all $h_j$ are holomorphic.
\end{proof}

\subsection{Polynomial convexity and approximation on stratified totally real sets}

We now consider approximation on stratified totally real sets.
The following result gives a sufficient condition for when a compact polynomially convex set $X\subset\mathbb C^n$ has the property
that $\mathcal{C}(X)=[z_1,...,z_n]_X$. The technical and main 
part of the proof is contained in \cite[Proposition~3.13]{ManneWoldOvrelid}. For convenience
of the reader we state here a simplified version of this proposition: 

\begin{proposition}\cite[Proposition~3.13]{ManneWoldOvrelid}\label{P1}
Let $K\subset\mathbb C^n$ be a compact set, $M\subset\mathbb C^n$ a totally real set, 
$M_0\subset M$ compact, and assume that $K\cup M_0$ is polynomially convex.  
Then for any $f\in\mathcal C(K\cup M_0)$ with $Supp(f)\cap K=\emptyset$, there 
exists a sequence $\{h_j\}_{j=1}^\infty\subset\mathcal O(\mathbb C^n)$ such that 
$\|h_j-f\|_{K\cup M_0}\rightarrow 0$ as $j\rightarrow\infty$.
\end{proposition}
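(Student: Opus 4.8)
The plan is to quote \cite[Proposition~3.13]{ManneWoldOvrelid} more or less directly, after explaining why the hypotheses of the (fuller, more technical) statement there reduce to the clean hypotheses we have stated. The key observation is that the general machinery in \cite{ManneWoldOvrelid} approximates on a set of the form $K\cup M_0$ where $M_0$ is a \emph{stratified} totally real set attached to $K$, and where one carries along a ``rate'' or ``weight'' controlling how fast the approximants decay away from $K$; in the present simplified version we only ask for $M$ to be genuinely totally real (a single stratum), which is the bottom case of that hierarchy, so no stratification bookkeeping is needed. First I would fix $f\in\mathcal C(K\cup M_0)$ with $\mathrm{Supp}(f)\cap K=\emptyset$ and choose, using a partition of unity, a finite cover of $\mathrm{Supp}(f)$ by small balls $B_i$ in each of which $M$ is totally real and on which the local $\mathcal C^1$-approximation of Manne (the proposition on page~522 of \cite{Manne}, already invoked in the proof of Lemma \ref{projection}) applies; writing $f=\sum f_i$ with $\mathrm{Supp}(f_i)\Subset B_i$ and $\mathrm{Supp}(f_i)\cap K=\emptyset$, it suffices to approximate each $f_i$.

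The heart of the argument, which is exactly what \cite[Proposition~3.13]{ManneWoldOvrelid} supplies, is to pass from such local holomorphic approximations (good on $M_0\cap U$, small on a ``collar'' region, but only defined near $M_0$) to global holomorphic functions good on all of $K\cup M_0$. This is done by a Cousin-type gluing exactly as in the proof of Lemma \ref{projection}: one uses the polynomial convexity of $K\cup M_0$ to find a Runge–Stein neighborhood, solves a $\debar$-problem (Cousin I with estimates) against a cover adapted to the collar, obtains correction terms $g_j^i\to 0$ uniformly on compacta, and patches the local approximant with its corrections to get $h_j\in\mathcal O$ of a neighborhood with $h_j\to f_i$ on $K\cup M_0$. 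Finally, since $K\cup M_0$ is polynomially convex, Oka–Weil upgrades $h_j$ to entire functions, i.e.\ elements of $\mathcal O(\mathbb C^n)$, with the same limiting behaviour, which is the assertion.

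The main obstacle — and the reason we simply cite \cite{ManneWoldOvrelid} rather than reproving it — is the interplay between the vanishing of $f$ on $K$ and the estimates in the Cousin problem: one must arrange that the collar on which the local approximants are small is positioned so that the $\debar$-corrections do not destroy the approximation on $K$, and for this the precise quantitative statement on page~522 of \cite{Manne} (uniform smallness on the set $W$) is essential. In the stratified setting of \cite{ManneWoldOvrelid} this requires an induction over strata with a carefully propagated rate function; in our single-stratum case it is the base of that induction, but even there the bookkeeping of neighbourhood bases $\{\Omega_j\}$ and the compatibility $U_j^1\cap U_j^2\subset W$ is the technical crux, and it is cleanest to take it from the cited reference.
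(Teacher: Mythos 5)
Your proposal is consistent with the paper, which in fact offers no proof of Proposition \ref{P1} at all: it is stated as a simplified version of \cite[Proposition~3.13]{ManneWoldOvrelid} and taken from that reference, exactly as you do. Your sketch of how the cited result works (Manne's local approximation, Cousin-I gluing with estimates on a Runge--Stein neighborhood furnished by polynomial convexity, then Oka--Weil) correctly mirrors the technique the paper itself uses in the proof of Lemma \ref{projection}, so there is nothing to object to.
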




\begin{theorem}\label{approx}
Let $X$ be a polynomially convex compact set in $\C^n$ and assume that there are closed sets 
$X_0\subset \cdots \subset X_N=X$ such that 
\begin{itemize}
\item[(i)] $X_j\setminus X_{j-1}$, $j=1,\ldots,N$, is a totally real set.
\end{itemize}
Then for $f\in\mathcal C(X)\cap\hol(X_0)$ we have that $f\in [z_1,...,z_n]_X$.
In particular, if $\mathcal{C}(X_0)=\hol(X_0)$ then $\mathcal C(X)=[z_1,...,z_n]_X$.
\end{theorem}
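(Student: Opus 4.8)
The plan is to prove Theorem~\ref{approx} by induction on $N$, peeling off the top stratum $X_N\setminus X_{N-1}$ at each stage. For $N=0$ the set $X=X_0$ is polynomially convex and $f\in\hol(X_0)$, so $f\in[z_1,...,z_n]_X$ by the Oka--Weil theorem. For the inductive step, assume the theorem for all filtrations of length $N-1$ over polynomially convex compact sets, let $X_0\subset\cdots\subset X_N=X$ and $f\in\mathcal C(X)\cap\hol(X_0)$ be given, and fix $\e>0$. Two points are needed: (1) $X_{N-1}$ is automatically polynomially convex, so the induction hypothesis applies to it with the filtration $X_0\subset\cdots\subset X_{N-1}$ and the function $f|_{X_{N-1}}\in\mathcal C(X_{N-1})\cap\hol(X_0)$, producing a polynomial $q$ with $\|q-f\|_{X_{N-1}}<\e$; (2) a cut-off near $X_{N-1}$ reduces the remaining error to a function supported in a compact totally real set, which is then treated by Proposition~\ref{P1}.

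For (1): since $X$ is polynomially convex, $\widehat{X_{N-1}}\subset X$, hence $\widehat{X_{N-1}}\setminus X_{N-1}\subset X_N\setminus X_{N-1}=:M$, which is totally real. If there were a point $p$ in this difference, choose a ball $B$ about $p$ so small that $\overline B\cap X_{N-1}=\emptyset$, that $M\cap\overline B$ is polynomially convex, and that $p$ is a peak point of $M\cap\overline B$ (local polynomial convexity and the existence of local peak functions are standard for totally real $\mathcal C^1$ sets); let $\varphi$ be holomorphic near $\overline B$ with $\varphi(p)=1$ and $|\varphi|<1$ on $(M\cap\overline B)\setminus\{p\}$. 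The Shilov boundary of $[z_1,...,z_n]_{\widehat{X_{N-1}}}$ lies in $X_{N-1}$, hence misses $\overline B$, so Rossi's local maximum modulus principle gives $|P(p)|\le\|P\|_{\widehat{X_{N-1}}\cap\partial B}$ for every polynomial $P$; since $\widehat{X_{N-1}}\cap\overline B\subset M\cap\overline B$, approximating $\varphi$ uniformly on $M\cap\overline B$ by polynomials (Oka--Weil) yields $1=|\varphi(p)|\le\|\varphi\|_{\widehat{X_{N-1}}\cap\partial B}<1$, a contradiction. Hence $X_{N-1}$ is polynomially convex, and the same argument shows that $X_{N-1}\cup L$ is polynomially convex for every compact $L\subset M$.

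For (2): with $q$ as above, continuity of $f-q$ on $X$ lets us pick a neighbourhood $W$ of $X_{N-1}$ with $\|f-q\|_{X\cap\overline W}<\e$ and a cut-off $\chi$ equal to $1$ near $X_{N-1}$ with $\operatorname{Supp}(\chi)\subset W$. Then $f=q+\chi(f-q)+g$, where $g:=(1-\chi)(f-q)$ satisfies $\|\chi(f-q)\|_X<\e$ and $g\in\mathcal C(X)$ vanishes near $X_{N-1}$, so $\operatorname{Supp}(g)\cap X$ is a compact subset of $M$. Choose an open $V\supset\operatorname{Supp}(g)\cap X$ so small that $\overline V\cap X\subset M$, and set $K:=X\setminus V$ and $M_0:=\overline V\cap X$; then $K\cup M_0=X$ is polynomially convex, $M_0$ is a compact totally real set, and $\operatorname{Supp}(g)\cap K=\emptyset$, so Proposition~\ref{P1} produces entire functions $h_j$ with $\|h_j-g\|_X\to0$, whence $g\in[z_1,...,z_n]_X$. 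Consequently $q+g\in[z_1,...,z_n]_X$ and $\|f-(q+g)\|_X<\e$; letting $\e\to0$ gives $f\in[z_1,...,z_n]_X$, and the last assertion of the theorem follows because $\mathcal C(X_0)=\hol(X_0)$ makes every $f\in\mathcal C(X)$ satisfy the hypothesis. The delicate point is that Proposition~\ref{P1} yields approximation only on $K\cup M_0$: the decomposition must be arranged so that $K\cup M_0$ is all of $X$, which is why one takes $K=X\setminus V$ rather than $K=X_{N-1}$; this makes the polynomial convexity of $K\cup M_0$ free but forces the separate verification in (1)---via Rossi's principle---that the smaller set $X_{N-1}$ is polynomially convex, and that verification is the main technical input beyond Proposition~\ref{P1}.
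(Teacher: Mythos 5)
Your proposal is correct, and its core mechanism is the same as the paper's: stratify, approximate on the lower stratum, subtract, cut off with $\chi$ near the lower stratum so that the remaining error is a continuous function supported in the totally real part, and then feed that error to Proposition~\ref{P1} with the splitting $K=X\setminus V$, $M_0=\overline V\cap X$, so that $K\cup M_0=X$ and polynomial convexity is free; your downward induction on the number of strata is only cosmetically different from the paper's upward induction ($f\in\hol(X_k)\Rightarrow f\in\hol(X_{k+1})$, followed by Oka--Weil on the polynomially convex set $X$). The one genuine divergence is how you establish polynomial convexity of $X_{N-1}$: you argue by contradiction through Rossi's local maximum modulus principle combined with the standard local facts that a totally real set meets a small closed ball in a polynomially convex compact on which every point is a (local) peak point, whereas the paper gets the same conclusion directly from Proposition~\ref{P1} — for $x\in X\setminus X_{N-1}$ one approximates on $X$ a continuous bump that is $1$ at $x$ and vanishes off a small neighborhood, obtaining an entire (hence, after Oka--Weil, polynomial) function $P_x$ with $\|P_x\|_{X_{N-1}}<|P_x(x)|$. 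Both routes are valid; the paper's is more economical in that it needs no input beyond Proposition~\ref{P1} itself, while yours imports Rossi's principle and the local theory of totally real sets (facts which are indeed standard, and which the paper tacitly invokes elsewhere, e.g.\ that totally real points are peak points). Your extra remark that $X_{N-1}\cup L$ is polynomially convex for compact $L\subset M$ is true but not needed, since your decomposition already has $K\cup M_0=X$.
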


\begin{proof}
We notice that each $X_j$ has to be polynomially convex. In fact, we trivially have 
$\widehat{X_{N-1}}\subseteq \widehat{X_N}=X_N$. 
Moreover, if $x\in X_N\setminus X_{N-1}$ then, by Proposition \ref{P1}, there is a 
polynomial $P_x$ such that $\|P_x\|_{X_{N-1}} < |P_x(x)|$. Hence, $x\notin \widehat{X_{N-1}}$ and so
$\widehat{X_{N-1}}=X_{N-1}$. Repeating the argument we see that $X_{N-2}$ is polynomially convex, and so on.

Now let $f\in\mathcal C(X)\cap\hol(X_0)$.  Proceeding by induction we will show that 
if $f\in\hol (X_k)$ then $f\in\hol (X_{k+1})$ for $k\geq 0$.   Let $\epsilon>0$, and let 
$g\in\hol (\mathbb C^n)$ with $\|g-f\|_{X_k}<\frac{\epsilon}{4}$.  Let $U$ be an open neighborhood 
of $X_k$ such that $\|g-f\|_{\overline U\cap X_{k+1}}<\frac{\epsilon}{2}$.   
Let $\chi\in\mathcal C^\infty_0(U)$ with $0\leq\chi\leq 1$ and $\chi\equiv 1$ near $X_k$.  Then 
$$
g + (1-\chi)\cdot (f-g)
$$
is an $\epsilon$-approximation of $f$ on $X_{k+1}$, $g$ is entire, and $(1-\chi)\cdot(f-g)$ 
is  approximable on $X_{k+1}$ by entire functions by Proposition \ref{P1}.

\end{proof}

\begin{lemma}\label{pcgraph}
Let $\Omega\subset\mathbb C^n$ be a $\mathcal C^1$-smooth polynomially 
convex domain, 
and let $h_j\in PH(\Omega)\cap\mathcal C(\overline\Omega)$ for 
$j=1,...,N$.  Then $\cG_h(\overline\Omega)$
is polynomially convex. 
\end{lemma}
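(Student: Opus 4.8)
The plan is to deduce polynomial convexity of $\cG_h(\overline\Omega)$ from polynomial convexity of $\overline\Omega$ together with the pluriharmonicity of the $h_j$, by exhibiting, for each point of $\C^n\times\C^N$ outside the graph, a holomorphic function on a neighborhood of $\cG_h(\overline\Omega)$ that separates that point from the set --- or, more efficiently, by producing a polynomial that does so after using a Runge-type approximation. First I would reduce to the case of a single pluriharmonic function, or rather observe that the graph $\cG_h(\overline\Omega)$ is the intersection over $j=1,\dots,N$ of the sets $\{(z,w): w_j = h_j(z),\ z\in\overline\Omega\}$ projected appropriately, together with $\{z\in\overline\Omega\}$; since an intersection of polynomially convex sets is polynomially convex, it suffices to treat $N=1$, i.e.\ to show that $\cG_{h}(\overline\Omega) = \{(z,h(z)): z\in\overline\Omega\}$ is polynomially convex when $h\in PH(\Omega)\cap\cC(\overline\Omega)$ and $\overline\Omega$ is polynomially convex. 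Actually a cleaner reduction: the graph over $\overline\Omega$ of $h=(h_1,\dots,h_N)$ is polynomially convex provided each single graph $\cG_{h_j}$ is, because $\cG_h(\overline\Omega) = \bigcap_j \pi_j^{-1}(\cG_{h_j}(\overline\Omega))$ where $\pi_j\colon\C^n_z\times\C^N_w \to \C^n_z\times\C_{w_j}$, and preimages under coordinate projections of polynomially convex sets are polynomially convex, as are finite intersections.

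For the single-function case, the key point is that a pluriharmonic $h$ on $\Omega$ can be written locally as $h = f + \overline{g}$ with $f,g$ holomorphic; but globally on a polynomially convex (hence simply connected in the relevant sense? — not necessarily) domain one cannot in general split $h$ this way. Instead I would use the standard trick: the conjugate-linear piece is controlled because $\debar h = \debar(\overline{g}) $ and $\overline{g}$ is anti-holomorphic, so $w - h(z) = (w - f(z)) - \overline{g(z)}$ locally. The approach I would actually carry out is via the Oka--Weil / Kallin-lemma style argument, or better, via the following: suppose $(z^0,w^0)\notin\cG_h(\overline\Omega)$. If $z^0\notin\overline\Omega$, then since $\overline\Omega$ is polynomially convex there is a polynomial $P(z)$ with $|P(z^0)| > \|P\|_{\overline\Omega} \geq \|P\|_{\cG_h(\overline\Omega)}$ (viewing $P$ as a function of $(z,w)$), and we are done. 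So assume $z^0\in\overline\Omega$ but $w^0 \neq h(z^0)$, say $w_j^0 \neq h_j(z^0)$ for some $j$. I must separate $(z^0,w^0)$ from the graph. Here pluriharmonicity enters: I would like to approximate, uniformly on $\cG_h(\overline\Omega)\cup\{(z^0,w^0)\}$, the function that is $0$ on the graph and $1$ at the point, by polynomials.

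The cleanest route is Manne-type approximation combined with the following observation, which I expect to be the technical heart: for $h$ pluriharmonic on $\Omega$ and continuous up to $\overline\Omega$ with $\overline\Omega$ polynomially convex, the function $w_j - h_j(z)$ (defined on $\overline\Omega\times\C^N$) is uniformly approximable on $\cG_h(\overline\Omega)$ by polynomials in $(z,w)$ --- indeed more is true, $\overline{g_j}$ restricted to the graph equals $w_j - f_j$ which involves only the holomorphic data, but the passage from local to global splitting is exactly where I must be careful. I would handle this by using that $\debar h_j$ is $\debar$-closed and anti-holomorphic, invoking a Cauchy--Riemann / $\debar$-solving argument on a Stein neighborhood of $\overline\Omega$ (which exists since $\overline\Omega$ is polynomially convex, by Oka--Weil one gets such neighborhoods that are Runge), to write $\overline{h_j} $ appropriately; alternatively, the standard fact (going back to the strictly pseudoconvex case and extended here) that the graph of a pluriharmonic map over a polynomially convex base is polynomially convex can be proven by noting $\cG_h(\overline\Omega)$ is the image under the proper holomorphic-in-$z$, anti-holomorphic-in-the-$\overline g$-part map, and applying Duval--Sibony or the fact that a set fibered over a polynomially convex base with totally real / analytic fibers is polynomially convex when there are no obstructions. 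The main obstacle is precisely this: promoting the local decomposition $h_j = f_j + \overline{g_j}$ to something globally usable on all of $\overline\Omega$, or else finding a slick argument (e.g.\ Kallin's lemma with the polynomial $z \mapsto$ separating-polynomial on $\overline\Omega$ and the map $(z,w)\mapsto \sum (w_j - \text{holomorphic approx})$) that sidesteps it; I would expect to spend most of the effort showing that $\|w_j - h_j(z)\|$ can be controlled by holomorphic functions on a Runge neighborhood, using pluriharmonicity to kill the obstruction to solving the relevant $\debar$-equation with estimates.
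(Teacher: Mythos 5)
Your reduction to $N=1$ via $\cG_h(\overline\Omega)=\bigcap_j\pi_j^{-1}\bigl(\cG_{h_j}(\overline\Omega)\bigr)$ is fine (monotonicity of hulls makes the intersection step work), and the case $z^0\notin\overline\Omega$ is handled correctly. But the heart of the lemma --- separating $(z^0,w^0)$ from the graph when $z^0\in\overline\Omega$ and $w^0_j\neq h_j(z^0)$ --- is never actually carried out: you list candidate strategies (globalizing $h_j=f_j+\overline{g_j}$, solving a $\debar$-equation with estimates to control $w_j-h_j$, Kallin's lemma, Duval--Sibony) and explicitly defer the decisive step. The route you lean on most, promoting the local splitting to a global one or approximating $w_j-h_j(z)$ by polynomials on the graph, is either obstructed (the global splitting genuinely fails on non-simply-connected $\Omega$, as you yourself note) or vacuous (on the graph $w_j-h_j(z)\equiv 0$, so approximating it there says nothing about separation at the point $(z^0,w^0)$). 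So as written there is a genuine gap: no separating object is produced.

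The missing idea is that you do not need holomorphic or polynomial separation at all; plurisubharmonic separation suffices. After a rotation one may assume $g(z,w):=\mathfrak{Re}\,(w_j-h_j(z))$ is positive at $(z^0,w^0)$; this $g$ is \emph{pluriharmonic} on $\Omega\times\C^N$, continuous up to the boundary, and vanishes identically on $\cG_h(\overline\Omega)$. Since $h_j$ is only continuous on $\overline\Omega$, one approximates $g$ uniformly on $\overline\Omega$ by functions plurisubharmonic and smooth on neighborhoods of $\overline\Omega$ --- this is exactly where the $\mathcal C^1$-smoothness of $b\Omega$ enters, via Forn\ae ss--Wiegerinck \cite{FW}, a hypothesis your argument never uses, which signals that the boundary-regularity issue is unaddressed. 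Finally, because $\overline\Omega$ is polynomially convex it admits Stein Runge neighborhoods $\widetilde\Omega$, so a function in $PSH(\widetilde\Omega\times\C^N)$ that is larger at $(z^0,w^0)$ than on $\cG_h(\overline\Omega)$ already excludes $(z^0,w^0)$ from the $\hol(\widetilde\Omega\times\C^N)$-hull and hence from the polynomial hull. This pluriharmonic-separation argument sidesteps entirely the global decomposition and $\debar$-machinery your proposal expected to struggle with.
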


\begin{proof}
Let $(z_0,w_0)\in (\C^n\times \C^N)\setminus \cG_h(\overline{\Omega})$. If $z_0 \notin \overline{\Omega}$ it is clear that 
$(z_0,w_0)\notin \widehat{\cG_h(\overline{\Omega})}$ since $\overline{\Omega}$ is polynomially convex. Assume that 
$z_0\in \overline{\Omega}$. For some $j$ we have $\mathfrak{Re}\, (w_{0,j}-h_j(z_0))\neq 0$
and we may assume that $g(z,w):=\mathfrak{Re}\, (w_{j}-h_j(z))$ is positive at $(z_0,w_0)$.
Then $g$ is pluriharmonic in $\Omega$, continuous up to the boundary, and satisfies
$g(z_0,w_0)> \|g\|_{\cG_h(\overline\Omega)}=0$. By \cite[Theorem 1]{FW}, the function $g$ is uniformly approximable
on $\overline{\Omega}$ by functions in $PSH(\Omega)\cap \mathcal{C}^{\infty}(\overline{\Omega})$.
Thus, since in addition $\overline{\Omega}$ is polynomially convex, 
it follows that there is an open Runge and Stein neighborhood $\widetilde{\Omega}\supset \overline{\Omega}$ 
and a function $\tilde{g}\in PSH(\widetilde{\Omega}\times \C^N)$ such that $\tilde{g}(z_0,w_0)> \|\tilde{g}\|_{\cG_h(\overline\Omega)}$.
It thus follows that $(z_0,w_0)\notin \widehat{\cG_h(\overline\Omega)}_{\hol(\widetilde{\Omega}\times \C^N)}$ and so
$(z_0,w_0)\notin \widehat{\cG_h(\overline\Omega)}$.
\end{proof}

The first part of the following result can be found in \cite{ManneWoldOvrelid}. 

\begin{proposition}\label{easybishop}
Let $K\subset\mathbb C^N$ be compact, let $F:\mathbb C^N\rightarrow \mathbb C^M$
be the uniform limit on $K$ of entire functions, and let $Y=F(K)$; note 
that $F$ extends to $\widehat K$.  For a point $y\in Y$, 
let $F_y$ denote the fiber $F^{-1}(y)\subset\widehat K$, and let $K_y$ denote 
the restricted fiber $F_y\cap K$.   The following holds: 
\begin{itemize}
\item[i)] if $y$ is a peak point for the algebra $[z_1,...z_M]_Y$, then $\widehat K\cap F_y=\widehat K_y$, and 
\item[ii)] if $[z_1,...,z_M]_{Y}=\mathcal C(Y)$ then 
$$
[z_1,...,z_N]_K=\{f\in\mathcal C(K):f|_{K_y}\in [z_1,...,z_N]_{K_y} \mbox{ for all } y\in Y\}.
$$
\end{itemize}
\end{proposition}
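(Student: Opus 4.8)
The plan is to reduce everything to the classical Bishop-style fibration results over the polynomially convex hull, using only that $F$ is a uniform limit of entire functions on $K$. First I would observe that since $F$ is a uniform limit on $K$ of entire functions, the component functions of $F$ lie in $[z_1,\dots,z_N]_K$, so $F$ extends continuously to $\widehat K$ (extend each coordinate via the Gelfand transform, or simply note that a uniformly convergent sequence of entire functions on $K$ converges uniformly on $\widehat K$ by the maximum principle); moreover $F(\widehat K)\subseteq \widehat{F(K)}=\widehat Y$. For part i), fix a peak point $y$ for $[z_1,\dots,z_M]_Y$ and a polynomial $P$ peaking at $y$ on $Y$; then $P\circ F$ is approximable on $K$ by entire functions and, after replacing $P$ by a high power and composing with a suitable function, one gets a function in $[z_1,\dots,z_N]_K$ which is $1$ at $y$-fiber points and small elsewhere on $K$. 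The inclusion $\widehat{K_y}\subseteq \widehat K\cap F_y$ is immediate since $K_y\subseteq K$ and $F$ is constant equal to $y$ on $F_y$. For the reverse inclusion, take $x\in \widehat K\cap F_y$ and a function $\varphi\in [z_1,\dots,z_N]_K$; I want $|\varphi(x)|\le \|\varphi\|_{K_y}$. Using the peak function $h$ at $y$, consider $\varphi\cdot h^k$: on $K$ its sup norm is controlled, as $k\to\infty$, by $\|\varphi\|_{K_y}$ (the contribution away from $K_y$ is killed by $h^k$, the contribution near $K_y$ is close to $\|\varphi\|_{K_y}$), while at $x$ we have $h(x)=1$, so $|\varphi(x)|=|\varphi(x)h(x)^k|\le \|\varphi h^k\|_{\widehat K}=\|\varphi h^k\|_K\to \|\varphi\|_{K_y}$. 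This gives $x\in\widehat{K_y}$.

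For part ii), assume $[z_1,\dots,z_M]_Y=\mathcal C(Y)$; in particular every point of $Y$ is a peak point, so part i) applies at every $y\in Y$. The inclusion $\subseteq$ is trivial: a uniform limit of polynomials on $K$ restricts to a uniform limit of polynomials on each $K_y$. For $\supseteq$, let $f\in\mathcal C(K)$ with $f|_{K_y}\in[z_1,\dots,z_N]_{K_y}$ for every $y$. The idea is a partition-of-unity patching over the base $Y$. Fix $\varepsilon>0$. For each $y\in Y$, pick $g_y\in[z_1,\dots,z_N]_K$ (actually it suffices to approximate on $K_y$, then extend off the fiber — this is where part i) enters, since $\widehat K\cap F_y=\widehat{K_y}$ lets us localize) with $\|g_y-f\|_{K_y}<\varepsilon$; by continuity of $f$ and $g_y$ there is a neighborhood $V_y$ of $y$ in $Y$ with $\|g_y-f\|_{K\cap F^{-1}(V_y)}<2\varepsilon$. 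Cover the compact $Y$ by finitely many $V_{y_1},\dots,V_{y_r}$, take a partition of unity $\{\chi_i\}$ on $Y$ subordinate to this cover, and set $G=\sum_i (\chi_i\circ F)\, g_{y_i}$. Then $\|G-f\|_K\le \sum_i \|(\chi_i\circ F)(g_{y_i}-f)\|_K < 2\varepsilon$. It remains to see $G\in[z_1,\dots,z_N]_K$: each $g_{y_i}$ is already in this algebra, and each $\chi_i\circ F$ is a uniform limit on $K$ of polynomials in $z$ because $\chi_i\in\mathcal C(Y)=[z_1,\dots,z_M]_Y$ and $F$ is a uniform limit of entire functions on $K$ (so $\chi_i\circ F$ is a uniform limit of polynomials in $F$, hence in $[z_1,\dots,z_N]_K$). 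Letting $\varepsilon\to 0$ gives $f\in[z_1,\dots,z_N]_K$.

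The main obstacle I anticipate is the honest verification in part ii) that one may choose the local approximant $g_y$ to be a genuine element of $[z_1,\dots,z_N]_K$ — not merely something approximating $f$ on the fiber $K_y$ — because the hypothesis only provides fiberwise approximability. This is exactly the role of part i): knowing $\widehat K\cap F_y=\widehat{K_y}$ means the fiber $K_y$ is itself polynomially convex as a ``slice'' and, more importantly, functions approximable on $K_y$ by polynomials can be spread out to functions approximable on all of $K$ by using the peak function $h_y$ at $y$ to cut off — replace a polynomial approximant $p$ of $f$ on $K_y$ by $p\cdot h_y^k + (\text{correction})$, or argue via a local version of Proposition \ref{P1}/the gluing in Theorem \ref{approx}. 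Making this cutoff-and-glue precise, with uniform control of errors as the base neighborhoods shrink, is the one step that needs care; everything else is the standard Bishop fibration argument and soft functional analysis. I would also need to double-check the measure-theoretic/Banach-algebra point that $\chi_i\circ F\in\mathcal C(Y)$ pulls back into the closed algebra $[z_1,\dots,z_N]_K$, which follows cleanly once $F$ is known to be a limit of entire maps on $\widehat K$.
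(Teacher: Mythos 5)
Your proposal is correct, and for part ii) it is exactly the paper's argument: the paper's proof is the one-line instruction to glue fiberwise approximants with a continuous partition of unity pulled back from $Y=F(K)$, which is precisely your $G=\sum_i(\chi_i\circ F)g_{y_i}$ construction, with the hypothesis $\mathcal{C}(Y)=[z_1,\ldots,z_M]_Y$ entering, as in your write-up, only to guarantee $\chi_i\circ F\in[z_1,\ldots,z_N]_K$. For part i) the paper gives no argument at all (it cites Proposition 4.3 of Manne--Wold--\O vrelid), so your peak-function argument --- extend $F$ to $\widehat K$ by the maximum principle, note $\hat F(x)=y$ for $x\in\widehat K\cap F_y$, and estimate $|\varphi(x)|\le\|\varphi\,(h\circ F)^k\|_{\widehat K}=\|\varphi\,(h\circ F)^k\|_K\to\|\varphi\|_{K_y}$ --- is a self-contained substitute; it is the standard Bishop-type argument and is sound, modulo the small slips that a peak function for $[z_1,\ldots,z_M]_Y$ need not be a polynomial (only an element of the closed algebra, which suffices) and that by $h(x)$ you mean the Gelfand extension of $h\circ F$ evaluated at $x$, which equals $h(\hat F(x))=h(y)=1$. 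One correction: the ``main obstacle'' you flag in part ii) is not actually there. The hypothesis $f|_{K_y}\in[z_1,\ldots,z_N]_{K_y}$ means $f|_{K_y}$ is a uniform limit on $K_y$ of polynomials in $z_1,\ldots,z_N$, so you may simply take $g_y$ to be such a polynomial, which is automatically a global element of $[z_1,\ldots,z_N]_K$; no cutoff via peak functions, and indeed no appeal to part i), is needed for part ii).
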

\begin{remark}
Note that ii) is a consequence of Bishop's antisymmetric decomposition theorem.  
The proof we will give here is due to Nils \O vrelid, and is almost trivial. 
\end{remark}

\begin{proof}
For the proof of i) see Proposition 4.3 in \cite{ManneWoldOvrelid}.
To prove ii) simply glue together functions which are good 
near the fibers $K_y$ using a continuous partition of 
unity on the projection $Y=F(K)$.  
\end{proof}

\section{Proof of Theorem \ref{polydisk}}\label{bidisksektion}
We first note that the graph $\mathcal{G}_h(\overline{\D}^2)$ is polynomially convex. The proof of Lemma~\ref{pcgraph}
goes through except for that we, strictly speaking, cannot use Theorem~1 in \cite{FW} to conclude that the function $g$ 
is uniformly approximable on $\overline{\D}^2$ by functions in $PSH(\D^2)\cap \mathcal{C}^{\infty}(\overline{\D}^2)$
since $b\D^2$ is not smooth. However, this is obvious since $\D^2$ is starshaped; $g$ can even be approximated uniformly
on $\overline{\D}^2$ by functions pluriharmonic in a neighborhood of $\overline{\D}^2$.   

Assume there is no holomorphic disk in $\overline{\D}^2$ where all $h_j=h_j(z,w)$ are holomorphic.
We will show that the polynomials in $\C^2\times \C^N$ are dense in $\mathcal{C}(\mathcal{G}_h(\overline{\D}^2))$.
 
The part of the boundary $b\D^2\setminus \Gamma^2$ is the disjoint union 
$(\D \times S^1) \cup (S^1\times \D)$.  
Consider the part $\D \times S^1\subset b\D^2\setminus \Gamma^2$; the other part is treated in a completely analogous way.
Let $g_j$ be the complex conjugate of the the restriction of $\partial h_j/\partial \bar{z}$ to $\D\times S^1$.
Then $g_j(z,s)\in \mathcal{C}(\overline{\D}\times S^1)$ is 
holomorphic in $\D$ for each fixed $s\in S^1$. Moreover, by expressing $h_j(\cdot,s)$ 
as a Poisson integral and differentiating under the integral sign, it follows that 
$g_j\in \mathcal{C}^1(\D\times S^1)$. Let 
\begin{equation*}
Z=\{(z,s)\in \D\times S^1;\, g_j(z,s)=0,\, \forall j\}.
\end{equation*}
Then, by Corollary~\ref{dbarCor}, the graph of $h$ over $(\D\times S^1) \setminus Z$ is a totally real manifold.

\begin{lemma}\label{cover}
There are closed sets $B\subset E\subset \D\times S^1$ such that
\begin{itemize}
\item[(i)] $Z\subset E$
\item[(ii)] $E\setminus B$ is a $\mathcal{C}^1$-smooth manifold of real dimension $1$
\item[(iii)] for any $\delta>0$, one can cover $B$ by the union of finitely many pairwise disjoint open sets 
$U_0,\ldots,U_k$ such that $\mbox{diam}\,(U_j)<\delta$, $j=1,\ldots,k$ and $U_0\subset \{(z,s)\in \D\times S^1;\, |z|>1-\delta\}$.
\end{itemize}
\end{lemma}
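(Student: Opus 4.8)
The set $Z$ is a real-analytic-type (more precisely, $\mathcal C^1$) subvariety of $\D\times S^1$ cut out by the functions $g_j$, each of which is holomorphic in $z$ for fixed $s$. My plan is to stratify $Z$ by the order of vanishing in the $z$-variable. For each fixed $s\in S^1$, the slice $Z_s=\{z\in\D:\, g_j(z,s)=0\ \forall j\}$ is either all of $\D$ (which would force a holomorphic disk where all $h_j$ are holomorphic, contradicting our standing assumption) or a discrete subset of $\D$. So $Z_s$ is discrete for every $s$, and the key local picture is: near a point $(z_0,s_0)\in Z$, after relabelling pick $g_1$ with $g_1(\cdot,s_0)\not\equiv 0$; write $g_1(z,s_0)=(z-z_0)^m u(z)$ with $u(z_0)\neq 0$, and then a Weierstrass-type preparation in $z$ (using only that $g_1$ is $\mathcal C^1$ and holomorphic in $z$) shows that, locally, the $z$-roots of $g_1(\cdot,s)$ depend on $s$ in a controlled way. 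The first step is to make this precise and extract from it a partition of $Z$ into a ``good part'' $Z\setminus B$, where locally exactly one simple root moves $\mathcal C^1$-ly with $s$ (hence a $1$-dimensional $\mathcal C^1$ curve), and a ``bad part'' $B$ consisting of the points where roots collide or the multiplicity $m\geq 2$ or roots escape to $b\D$.

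Next I would set $E:=Z\cup(\text{the $\mathcal C^1$ curve pieces needed to close things up})$; in fact the natural choice is to take $E$ to be $Z$ together with its closure issues handled, and $B\subset E$ the closed ``bad'' locus just described, together with a neighborhood of $b\D$-escape. Concretely: near a good point $(z_0,s_0)$, the implicit function theorem applied to $g_1$ (legitimate since $g_1\in\mathcal C^1$ and $\partial g_1/\partial z(z_0,s_0)\neq 0$ when $m=1$) gives $z=\phi(s)$ with $\phi\in\mathcal C^1$, so locally $Z$ coincides with a $\mathcal C^1$ arc, giving (ii). The set $B$ is then the complement in $\overline{Z}$ (relative to $\D\times S^1$) of the union of these good arcs, and it is closed.

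For (iii): $B$ is compact once we intersect with $\{|z|\leq 1-\eta\}$ for fixed $\eta$, and ``roots escaping to $b\D$'' is collected into $U_0$. Precisely, given $\delta>0$, put $U_0=\{(z,s):|z|>1-\delta\}$, which is open and captures every point of $B$ with $|z|$ close to $1$; the remaining part $B\setminus U_0\subset\{|z|\leq 1-\delta\}\times S^1$ is compact, so it is covered by finitely many balls of diameter $<\delta$, and by a standard shrinking/disjointification argument (taking a finite subcover and replacing overlapping balls by slightly shrunk pairwise-disjoint open sets, which is possible since $B\setminus U_0$ is compact and we may perturb radii) we get pairwise disjoint $U_1,\ldots,U_k$ of diameter $<\delta$ covering $B\setminus U_0$, disjoint from $U_0$ after a further shrink. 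This yields the finite disjoint cover in (iii).

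\textbf{Main obstacle.} The crux is the local structure of $Z$: I must show that away from a genuinely small (closed, nowhere-dense, ``measure-zero-in-the-relevant-sense'') bad set $B$, the variety $Z$ is a $\mathcal C^1$ $1$-manifold. Because the $g_j$ are only $\mathcal C^1$ (not real-analytic) in $s$, I cannot invoke subanalytic stratification theory; I must instead run a hands-on argument: fix $s_0$, use that $g_1(\cdot,s_0)$ has isolated zeros of some finite order $m$, use Rouché/argument principle in $z$ to see that for $s$ near $s_0$ exactly $m$ zeros (with multiplicity) of $g_1(\cdot,s)$ stay in a small disk about $z_0$, and then argue that the locus where these $m$ zeros fail to be a single simple $\mathcal C^1$-varying point is closed with empty interior in $Z$. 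The delicate part is ensuring this ``bad'' locus is small enough that its $1$-dimensional ``size'' is negligible — but since (iii) only asks for a \emph{finite} cover by small-diameter sets and we are free to let $U_0$ swallow the $b\D$-escape, what we really need is only that $B$ is \emph{compact after removing a neighborhood of $b\D$} and \emph{contained in a $\mathcal C^1$ or lower-dimensional set}, which the order-of-vanishing stratification supplies. I expect the bookkeeping of ``which points go into $B$ vs.\ $E\setminus B$'' and verifying $B$ is closed to be the place where care is needed.
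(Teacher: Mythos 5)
Your overall strategy (slice-wise discreteness of the zero set, tracking roots with the argument principle, splitting into a good one-dimensional part and a bad part $B$) follows the same lines as the paper, but there is a genuine gap exactly at the point you yourself flag as delicate, namely part (iii). Covering the compact set $B\setminus U_0$ by finitely many small balls and then ``shrinking/disjointifying'' does not work: when you shrink overlapping balls to make them pairwise disjoint you lose the overlap regions, and nothing prevents points of $B$ from lying there (indeed, if $B\setminus U_0$ contained a connected subset of diameter $\geq\delta$, no finite pairwise disjoint open cover by sets of diameter $<\delta$ could exist at all, by connectedness). So (iii) does not follow from compactness plus ``$B$ is contained in a lower-dimensional set''; it is a structural statement about $B$, essentially that it can be separated by covers whose boundaries avoid $B$. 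The paper produces such covers explicitly: product boxes $U'_j=V_j\times I_0$ where $g_{j_1}(\cdot,s)\neq 0$ on $bV_j$ for all $s\in\bar I_0$ (so the vertical part of $bU'_j$ misses $E\supset B$), and where the endpoints of $I_0$ are chosen outside the branching set $K$ of Lemma \ref{known}; that lemma, proved via the argument principle and residue formulas (Newton sums of the roots), shows $K$ is closed with empty interior, so such endpoints exist, and over them the roots are $\mathcal C^1$, so the horizontal part of $bU'_j$ consists of manifold points of $E$, hence misses $B$. Only with $B\cap\bigl(\cup_j bU'_j\bigr)=\emptyset$ can one disjointify ($U_2=U'_2\setminus\overline U_1$, etc.) without dropping points of $B$. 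This use of the nowhere-density of the branching parameter set to place the boundaries of the cover is the missing idea in your proposal; your closing remark that only compactness and containment in a small set are needed is precisely where the argument breaks.

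A secondary problem is your description of $E$. At a point where $g_1(\cdot,s_0)$ has a simple zero, the implicit function theorem shows that $\{g_1=0\}$ is locally a $\mathcal C^1$ arc, but $Z$ is only \emph{contained} in that arc (the other $g_j$ must vanish too), so ``locally $Z$ coincides with a $\mathcal C^1$ arc'' is false in general: near such a point $Z$ could be, say, a Cantor subset of the arc and then fails to be a $1$-manifold there. This is exactly why the lemma is stated for an auxiliary closed set $E\supset Z$: the paper takes $E$ over each closed parameter arc $\bar I_i$ to be the zero set of a single function $g_{j_i}$ with $g_{j_i}(\cdot,s)\not\equiv 0$ for all $s\in\bar I_i$ (such a choice exists by the no-holomorphic-disk hypothesis and compactness of $S^1$), so that (i) holds trivially and (ii) holds by defining $B$ as the non-manifold points of $E$ together with the slices over $bI_i$; all the real work then goes into (iii). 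Keeping $E\approx Z$ either forces the false local description above or inflates $B$ with points you cannot control in step (iii).
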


We take this lemma for granted for the moment and finish the proof of Theorem~\ref{polydisk}.
We define a stratification $Y_{-3}\subset \cdots \subset Y_2=\overline{\D}^2$; cf., Section~\ref{generalsektion}.  
Let 
\begin{eqnarray*}
Y_2 &=& \overline{\D}^2,\\
Y_1 &=& b\D^2 \cup \{z\in \D^2;\, H_2(z)=0\}=:b\D^2 \cup Z_1,\\
Y_0 &=& b\D^2 \cup \textrm{Sing}(Z_{1}) \cup \textrm{Reg}(Z_{1})^1_{H},\\
Y_{-1} &=& b\D^2, \\
Y_{-2} &=& \Gamma^2 \cup E\cup E', \\
Y_{-3} &=& \Gamma^2 \cup B\cup B',
\end{eqnarray*}
where $B'\subset E' \subset S^1\times \D \subset b\D^2$ are analogous to $B\subset E$; see Subsection~\ref{pharm}
for the definition of $\textrm{Reg}(Z_{1})^1_{H}$.
Since there is no holomorphic disk in $\overline{\D}^2$ where all $h_j$ are holomorphic, it follows from 
Proposition~\ref{holodisk} that $\textrm{dim}\, (Y_j\cap \D^2)=j$ for $j=0,1,2$, and by Corollary~\ref{dbarCor} 
it follows that
the graph of $h$ over $Y_j\setminus Y_{j-1}$ is totally real, $j=1,2$.
Notice also that each $Y_j$ is closed. It now suffices to show that
\begin{equation*}
X_{0}:=\cG_h(Y_{-3}) \subset \cdots \subset X_5:=\cG_h(Y_2)
\end{equation*}
fulfills the requirements of Theorem~\ref{approx}. We have seen that $X_5$ is polynomially convex and that
$X_{j}\setminus X_{j-1}$ is a totally real manifold for $j \geq 2$. However, $X_1\setminus X_0$ is the graph of $h$ over
$Y_{-2}\setminus Y_{-3}=(E\cup E')\setminus(B\cup B')$ which, by Lemma~\ref{cover}, is a $\mathcal{C}^1$-smooth manifold of real dimension $1$.
Hence, $X_1\setminus X_0$ is also totally real. It remains to see that $\mathcal{C}(X_0)=\hol(X_0)$.
Since $X_0$ is the graph of $h$ over $Y_{-3}$, it suffices to show that $\mathcal{C}(Y_{-3})=\hol(Y_{-3})$.
Let $\varphi \in \mathcal{C}(Y_{-3})$, let $\epsilon>0$, and let $\tilde{f}\in \hol(\Gamma^2)$ 
be such that $|\varphi -\tilde{f}|< \epsilon/2$ on $\Gamma^2$. 
From Lemma~\ref{cover} it follows that we, for any $\delta>0$, can cover
$Y_{-3}$ by the union of disjoint open sets $U_0,\ldots,U_{\ell}$ such that $\mbox{diam}\, (U_j)<\delta$, $j=1,\ldots,\ell$
and $\sup_{x\in U_0} \mbox{dist}\, (x,\Gamma^2)<\delta$. If $\delta$ is sufficiently small it thus follows that 
$\tilde{f}\in \hol(U_0)$ and $|\varphi -\tilde{f}|< \epsilon$ on $U_0 \cap Y_{-3}$.
Moreover, perhaps after shrinking $\delta$, there are constant functions $c_j$ that 
satisfies $|c_j-\varphi|<\epsilon$ in $U_j\cap Y_{-3}$, $j=1,\ldots,\ell$. 
We define $f$ to be equal to $\tilde{f}$ in $U_0$ and $c_j$ on $U_j$, $j=1,\ldots,\ell$.
Then $f\in \hol(Y_{-3})$ and $|f-\varphi|<\epsilon$.

\bigskip

It remains to prove Lemma~\ref{cover}.
For fixed $s_0\in S^1$ there is a $j$ such that $g_j(\cdot,s_0)$ does not vanish identically since 
there is no holomorphic disk in $\overline{\D}^2$ where all $h_j$ are holomorphic.
Hence, there is a neighborhood $I_0$ of $s_0$ in $S^1$ such that $g_j(\cdot,s)$ does not vanish identically for $s\in I_0$.
We can thus find connected pairwise disjoint open $I_1,\ldots,I_{\ell}\subset S^1$ such that $S^1=\cup_j \bar{I}_j$
and for each $i=1,\ldots,\ell$ there is a $g_{j_i}$ such that $g_{j_i}(\cdot,s)$ does not vanish identically 
for fixed $s\in \bar{I}_i$.
We let 
\begin{equation*}
E_i=\{(z,s)\in \D\times \bar{I}_i;\, g_{j_i}(z,s)=0\}, \quad E=\cup_{i=1}^{\ell}E_i;
\end{equation*} 
clearly $Z\subset E$.
We then let $B_i$ be the union of $E_i\cap (\D\times bI_i)$ and the set of points
$(z,s)\in E_i\cap (\D\times I_i)$ such that for every neighborhood $V\ni (z,s)$, $E_i\cap V$ is not a $\mathcal{C}^1$-smooth manifold
of real dimension $1$. Letting $B=\cup_{i=1}^{\ell} B_i$ it follows that $B\subset E$ is closed and that
$E\setminus B$ is a $\mathcal{C}^1$-smooth manifold of real dimension $1$.
Parts (i) and (ii) are proved.

Let $A=\{(z,s)\in B;\, |z|\leq 1-\delta\}$. To prove part (iii) it suffices to cover $A$ by the union of open sets
$U'_1,\ldots,U'_k$ such that $\mbox{diam}\,(U'_j)<\delta/2$ and $B\cap (\cup_j bU'_j)=\emptyset$.
In fact, then we can take $U_1=U'_1$, $U_2=U'_2\setminus \overline{U}_1$, $U_3=U'_3\setminus (\overline{U}_1\cup \overline{U}_2)$, and so on;
finally we take $U_0=\{(z,s)\in \D\times S^1;\, |z|>1-\delta\}\setminus (\cup_{j=1}^k\overline{U}_j)$.

Fix $s_0\in S^1$. Then there is a $j$ such that $s_0\in \bar{I}_j$ and if $s_0\in I_j$ then $s_0$ does not belong to any other 
$I_i$. Assume first that $s_0\in I_1$. Then $g_{j_1}(\cdot,s_0)$ does not vanish identically
and we let $\{g_{j_1}(\cdot,s_0)=0\}\cap \{|z|\leq 1-\delta\}=\{a_1(s_0),\ldots,a_m(s_0)\}$. Let $V_j\subset \D$, $j=1,\ldots,m$,
be pairwise disjoint open neighborhoods of $a_j(s_0)$ such that $\mbox{diam}\,(V_j)<\delta/10$ and $g_{j_1}(\cdot,s_0)|_{bV_j}\neq 0$.
By Lemma~\ref{known} below, there is a neighborhood $I_0\subset I_1$ of $s_0$ such that $\mbox{diam}\,(I_0)<\delta/10$ and such that
$a_j(s)$, $j=1,\ldots,m$, is $\mathcal{C}^1$-smooth in a neighborhood of $bI_0$ and $g_{j_1}(\cdot,s)|_{bV_j}\neq 0$ for $s\in \bar{I}_0$.
Letting $U'_j=V_j\times I_0$ we see that $A\cap (\D\times I_0)$ is covered by by the union of the $U'_j$, that $\mbox{diam}\,(U'_j)<\delta/2$,
and that $\cup_j bU'_j \cap B=\emptyset$. 
If instead $s_0\in bI_1$ then there is a unique $j\neq 1$ such that also $s_0\in bI_j$; say that $s_0\in bI_1\cap bI_2$. Then neither
$g_{j_1}(\cdot,s_0)$ nor $g_{j_2}(\cdot,s_0)$ vanishes identically and we can use the product $g_{j_1}\cdot g_{j_2}$ in the above construction
to find a neighborhood $I_0\subset S^1$ of $s_0$ and finitely many $U'_j$ covering $A\cap (\D\times I_0)$.
By compactness of $S^1$ we find the desired covering of $A$.

\begin{lemma}\label{known}
Let $g\in \mathcal{C}^1(\D\times (-1,1))$. Assume that $g(\cdot,t)$ is holomorphic for each fixed $t\in (-1,1)$ and that 
$0$ is an isolated zero of $g(\cdot,0)$. Let $V$ be a neighborhood of $0$ not containing any other zero of $g(\cdot,0)$
and assume that $g(\cdot,0)|_{bV}\neq 0$. 
Then there is an $\epsilon>0$ and a closed subset $K\subset(-\epsilon,\epsilon)$ without interior such that
$\{g(\cdot,t)=0\}\cap V=\{a_1(t),\ldots,a_m(t)\}$ for $t\in (-\epsilon,\epsilon)$ and all $a_j(t)$ are 
$\mathcal{C}^1$-smooth in $(-\epsilon,\epsilon)\setminus K$.
\end{lemma}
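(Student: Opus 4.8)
The plan is to reduce the statement to the classical fact that the zeros of a holomorphic function of one variable depend continuously on parameters (counted with multiplicity), together with an argument that for all but a negligible set of parameters the zeros are in fact simple, so that the implicit function theorem applies. First I would use the hypothesis that $g(\cdot,0)|_{bV}\neq 0$ together with the continuity of $g$ to choose $\epsilon>0$ so small that $g(\cdot,t)|_{bV}\neq 0$ for all $t\in(-\epsilon,\epsilon)$; shrinking $V$ if necessary we may also assume $bV$ is a smooth circle. By the argument principle,
\begin{equation*}
N(t):=\frac{1}{2\pi i}\int_{bV}\frac{\partial_z g(z,t)}{g(z,t)}\,dz
\end{equation*}
is a continuous, integer-valued, hence constant, function of $t$ on $(-\epsilon,\epsilon)$; call its value $M$. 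Thus $g(\cdot,t)$ has exactly $M$ zeros in $V$ counted with multiplicity for every such $t$. Write this zero set as $\{a_1(t),\ldots,a_m(t)\}$ (with $m=m(t)\le M$ the number of distinct zeros). Continuity of the zeros as a set (no zeros escape to $bV$, none appear from $bV$) is the standard consequence of Hurwitz's theorem.

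The next step is to identify the bad parameter set $K$. Let $K$ be the set of $t\in(-\epsilon,\epsilon)$ for which $g(\cdot,t)$ has a multiple zero in $V$, equivalently the set of $t$ such that the resultant-type quantity
\begin{equation*}
R(t):=\frac{1}{2\pi i}\int_{bV} z\,\frac{\partial_z^2 g(z,t)\,\partial_z g(z,t)}{\big(\partial_z g(z,t)\big)^2+\,\cdot\cdot\cdot}\,dz
\end{equation*}
—or more simply, the common zeros of $g(\cdot,t)$ and $\partial_z g(\cdot,t)$—is nonempty. Cleaner: set $\psi(t)=\sum_{g(a,t)=0,\ a\in V}\big|\partial_z g(a,t)\big|$ (sum over distinct zeros), which is continuous in $t$ by the continuity of the zeros; then $K=\{t:\psi(t)=0\}$ is closed. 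To see $K$ has empty interior, suppose $g(\cdot,t)$ had a multiple zero for all $t$ in a subinterval $J$. Near such a zero $a(t_0)$ of multiplicity $\ge 2$, since $g$ is $\mathcal{C}^1$ jointly and holomorphic in $z$, one can use a Weierstrass-type preparation to write $g(z,t)=u(z,t)\prod_{i}(z-\alpha_i(t))$ on $J$ with $u\neq 0$ and the $\alpha_i$ continuous; a multiple zero for every $t\in J$ would force two branches $\alpha_i(t)\equiv\alpha_j(t)$ to coincide identically on $J$, and then
$g(z,t)=(z-\alpha_i(t))^2 v(z,t)$; differentiating the identity $g(\alpha_i(t),t)=0$ and $\partial_z g(\alpha_i(t),t)=0$ in $t$ and using $\partial_t g$ holomorphic in $z$ leads (after an elementary computation) to $\partial_t g(\alpha_i(t),t)=0$ as well, and iterating shows all $t$-derivatives of $g$ vanish at the point, which is incompatible with the zero being isolated at $t=0$ unless the interval $J$ does not contain $0$; a compactness/connectedness sweep then rules out any interval. (This is the step I expect to require the most care: turning "$\mathcal{C}^1$ in $t$, holomorphic in $z$" into enough regularity of the branches $\alpha_i(t)$ to run this argument; the honest route is to invoke that $g$ is actually the conjugate of a $z$-derivative of a Poisson integral, hence real-analytic in $t$ on the relevant sets, or simply to weaken the claim to "$K$ closed, nowhere dense" which is all that is used.)

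Finally, on $(-\epsilon,\epsilon)\setminus K$ every zero of $g(\cdot,t)$ in $V$ is simple, so $\partial_z g(a_j(t),t)\neq 0$, and the holomorphic implicit function theorem (applied to $g(z,t)=0$, viewing $t$ as a real parameter and using that $g$ is $\mathcal{C}^1$ in $(z,t)$ and $\mathcal{C}^\infty$, indeed holomorphic, in $z$) yields $\mathcal{C}^1$ functions $a_j(t)$ locally, and these patch together on each connected component of $(-\epsilon,\epsilon)\setminus K$ because the number $M$ of zeros is locally constant and none can collide without entering $K$. This gives the desired conclusion: $\{g(\cdot,t)=0\}\cap V=\{a_1(t),\ldots,a_m(t)\}$ for $t\in(-\epsilon,\epsilon)$ with each $a_j$ of class $\mathcal{C}^1$ off the closed set $K$ without interior. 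The application in Lemma~\ref{cover} only needs smoothness of the $a_j$ near a single boundary point $bI_0$, so one is free to choose that point outside the measure-zero (in fact nowhere dense) set $K$.
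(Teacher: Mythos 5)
There is a genuine gap in your choice of the exceptional set $K$. You define $K$ as the set of parameters $t$ for which $g(\cdot,t)$ has a multiple zero in $V$, and you then try to prove that this set has empty interior. That claim is false in general: take $g(z,t)=z^2$ (independent of $t$), which satisfies every hypothesis of the lemma ($0$ is an isolated zero of $g(\cdot,0)$ --- isolation is in the $z$-variable, not in $t$). Here every $t$ carries a double zero, so your $K$ is the whole interval. Your attempted exclusion argument (coinciding branches force all $t$-derivatives of $g$ to vanish, ``incompatible with the zero being isolated at $t=0$'') misreads the hypothesis: nothing in the lemma prevents a multiple zero from persisting for all $t$, and moreover $g$ is only $\mathcal{C}^1$ in $t$, so iterating $t$-derivatives is not even available. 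Note that in this example the conclusion of the lemma still holds (the zeros are constant, hence smooth), which shows the problem is with your construction of $K$, not with the statement: on the interior of the multiple-zero set you have no argument for smoothness, because the implicit function theorem at simple zeros does not apply there.

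The paper's proof avoids exactly this trap. In the case of two zeros it forms the Newton-sum integrals $\frac{1}{2\pi i}\int_{bV} z\,\frac{g'(z,t)}{g(z,t)}\,dz=a_1(t)+a_2(t)$ and the discriminant-type $\mathcal{C}^1$ function $(a_1(t)-a_2(t))^2$ built from the integral with $z^2$, and it defines $K$ as the \emph{boundary} of the zero set of that discriminant, which is automatically closed and without interior. Off $K$ there are then two cases: the discriminant is nonvanishing near $t_0$, so both zeros are simple and the residue formula (or your implicit function theorem argument) gives $\mathcal{C}^1$ dependence; or the discriminant vanishes identically near $t_0$, in which case $a_1(t)=a_2(t)$ there and this common value equals half of the $\mathcal{C}^1$ trace integral, hence is $\mathcal{C}^1$. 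Your proposal contains the first case but is missing the second, and the missing case is precisely what forces the different definition of $K$. If you replace your $K$ by the boundary of the multiple-zero locus (detected by such symmetric-function integrals, which remain $\mathcal{C}^1$ by differentiation under the integral sign) and add the ``identically coinciding zeros'' case, your argument becomes essentially the paper's; the higher-multiplicity case $m>2$ is handled the same way with higher Newton sums.
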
  

\begin{proof}
This lemma should be well known so we only sketch a proof.
Let $g'(z,t)$ denote the derivative of $g$ with respect to $z$; by the Cauchy integral formula it follows that 
$g'(z,t)\in \mathcal{C}^1(\D\times (-1,1))$. The mapping 
\begin{equation}\label{index}
t\mapsto \frac{1}{2\pi i}\int_{bV} \frac{g'(z,t)}{g(z,t)}\, dz
\end{equation}
is continuous for $t$ close to $0$ and takes values in $\N$; thus it is constant. If it is $1$ it follows that
$g(\cdot,t)$ has a simple zero, $a(t)$, in $V$ for small fixed $t$. Then, by the residue theorem, it follows that
\begin{equation}\label{res1}
t\mapsto \frac{1}{2\pi i}\int_{bV} z\,\frac{g'(z,t)}{g(z,t)}\, dz
\end{equation} 
is equal to $a(t)$. Differentiating under the integral sign we see that $a(t)$ is $\mathcal{C}^1$-smooth for $t$ close to $0$.

If the mapping \eqref{index} equals $2$, then $g(\cdot,t)$ has two zeroes, $a_1(t), a_2(t)$, possibly coinciding, 
in $V$ for small fixed $t$. The mapping \eqref{res1} now equals $a_1(t)+a_2(t)$ and it is still $\mathcal{C}^1$-smooth.
We say that $t_0$ is {\em branching} if $a_1(t_0)=a_2(t_0)$ and if there for every $\delta>0$ exists a $t$ such that 
$|t-t_0|<\delta$ and $a_1(t)\neq a_2(t)$. Let $K$ be the set of branching $t$'s. More formally, one can define $K$ 
as the boundary of the zero set of the $\mathcal{C}^1$-mapping
\begin{equation}\label{res2}
t\mapsto \frac{2}{2\pi i}\int_{bV} z^2\,\frac{g'(z,t)}{g(z,t)}\, dz -
\left(\frac{1}{2\pi i}\int_{bV} z\,\frac{g'(z,t)}{g(z,t)}\, dz\right)^2 =
(a_1(t)-a_2(t))^2.
\end{equation}
Then it is clear that $K$ is closed and without interior. Let $t_0$ be a point outside $K$. Then either \eqref{res2}
is non-zero in a neighborhood of $t_0$ or \eqref{res2} is $0$ in a neighborhood of $t_0$. In the first case both 
$a_1(t)$ and $a_2(t)$ are simple zeroes of $g(\cdot,t)$ for fixed $t$ close to $t_0$ and it follows from the first part of 
the proof that both $a_1(t)$ and $a_2(t)$ are $\mathcal{C}^1$-smooth close to $t_0$. In the second case we have that
$a_1(t)=a_2(t)$ for $t$ close to $t_0$ and so the $\mathcal{C}^1$-mapping \eqref{res1} is equal to $2a_1(t)=2a_2(t)$
close to $t_0$.

The case when \eqref{index} equals $m>2$ is treated similarly.
\end{proof}

\section{Proof of Theorem \ref{maximality}}

By a result of Tornehave (see \emph{e.g.} Corollary 3.8.11 in Stout \cite{Stout})
it is enough to prove that there exists such an \emph{analytic} set $Z$. \

Let $h_j$ denote the pluriharmonic extension of $f_j$ to $\mathbb D^2$, and 
write $h=(h_1,...,h_N):\overline{\mathbb D^2}\rightarrow\mathbb C^N$.  
We let $\mathcal G_{h}(\overline{\mathbb D^2})$ denote the graph of $h$ over 
$\overline{\mathbb D^2}$ in $\mathbb C^2\times\mathbb C^N$, and 
we let $\mathcal G_{h}(\Gamma^2)$ denote the graph over $\Gamma^2$. 
Since $\Gamma^2$ is totally real it suffices to show that \emph{either $\mathcal G_h(\Gamma^2)$
is polynomially convex, or there exists a variety $Z\subset\overline{\mathbb D^2}\setminus \Gamma^2$ 
with $\overline Z\setminus Z\subset\Gamma^2$, all $h_j$-s holomorphic on $Z$}, 
and $\mathcal G_h(Z)\subset\widehat{\mathcal G_h(\Gamma^2)}$.
We assume that $\mathcal G_h(\Gamma^2)$ is not polynomially convex, and proceed to find a variety $Z$.  \
We will consider different possibilities through some lemmas, and then we will sum up the entire argument in the end.

The first thing we want to show is that 
\begin{lemma}\label{diskinbd}
Either $\widehat{\mathcal G_h(\Gamma^2)}$ contains a holomorphic disk $\mathcal G_h(\triangle)$,
where $\triangle$ is one of the disks in $b\mathbb D^2\setminus\Gamma^2$, 
or $\widehat{\mathcal G_h(\Gamma^2)}\cap (\mathcal G_h(b\mathbb D^2\setminus\Gamma^2))=\emptyset$. 
\end{lemma}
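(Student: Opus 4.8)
The plan is to prove the contrapositive-type statement: if $\widehat{\mathcal{G}_h(\Gamma^2)}$ does meet $\mathcal{G}_h(b\mathbb{D}^2\setminus\Gamma^2)$, then it in fact contains a full holomorphic disk $\mathcal{G}_h(\triangle)$ with $\triangle$ one of the flat disks in $b\mathbb{D}^2\setminus\Gamma^2$. First I would record, exactly as in the beginning of Section~\ref{bidisksektion} (using that $\mathbb{D}^2$ is star-shaped and $h\in\mathcal{C}(\overline{\mathbb{D}^2})$), that $\mathcal{G}_h(\overline{\mathbb{D}^2})$ is polynomially convex; hence $\widehat{\mathcal{G}_h(\Gamma^2)}\subset\mathcal{G}_h(\overline{\mathbb{D}^2})$ and the hull is itself a graph, $\widehat{\mathcal{G}_h(\Gamma^2)}=\mathcal{G}_h(K)$ for a closed set $K$ with $\Gamma^2\subset K\subset\overline{\mathbb{D}^2}$. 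So the hypothesis of the first alternative means that $K$ contains a point $(z_0,w_0)$ with, say (by symmetry between the two factors), $|z_0|=1$ and $|w_0|<1$; set $p:=(z_0,w_0,h(z_0,w_0))\in\widehat{\mathcal{G}_h(\Gamma^2)}$.

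Next I would pass to the slice $\{z_1=z_0\}$. Since $|z_1|\le1$ on $\mathcal{G}_h(\Gamma^2)$ and $|z_1(p)|=1$, every representing measure $\mu$ for $p$ on $\mathcal{G}_h(\Gamma^2)$ satisfies $\int\tfrac{1+\bar z_0 z_1}{2}\,d\mu=1$ and hence is carried by $\{z_1=z_0\}\cap\mathcal{G}_h(\Gamma^2)=\mathcal{G}_h(\{z_0\}\times S^1)$; therefore $p\in\widehat{\mathcal{G}_h(\{z_0\}\times S^1)}$, the hull being the same whether taken in $\mathbb{C}^{2+N}$ or in the hyperplane $\{z_1=z_0\}\cong\mathbb{C}^{1+N}$. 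Writing $(w,\eta)$ for coordinates on this $\mathbb{C}^{1+N}$, the set $\mathcal{G}_h(\{z_0\}\times S^1)$ is the graph $\gamma=\{(w,\phi(w)):|w|=1\}$, where $\phi:=h(z_0,\cdot)$ is continuous on $\overline{\mathbb{D}}$, harmonic on $\mathbb{D}$ (being the uniform limit on $\overline{\mathbb{D}}$ of the harmonic functions $h(rz_0,\cdot)$ as $r\uparrow1$), and $\phi|_{S^1}=f(z_0,\cdot)$. Splitting the Fourier series of $\phi|_{S^1}$ according to the sign of the frequency, $\phi=\Phi+\overline{\Psi}$ with $\Phi,\Psi\in H^2(\mathbb{D})$ and $\Psi(0)=0$; and $p$ now reads $(w_0,\phi(w_0))$ with $|w_0|<1$.

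The crux is to show $\Psi\equiv0$, i.e.\ that $\phi$ is holomorphic. Take a representing measure $\mu$ for $p$ on $\gamma$ and push it forward by the homeomorphism $\gamma\to S^1$, $(w,\phi(w))\mapsto w$, to a probability measure $\nu$ on $S^1$. Evaluating at the polynomials $w^k$, $k\ge0$, gives $\int w^k\,d\nu=w_0^k$, and this together with $\nu\ge0$ forces $\nu$ to be the Poisson measure $P_{w_0}\tfrac{|dw|}{2\pi}$ (normalized arclength when $w_0=0$). Evaluating next at $w^k\eta_\ell$, $k\ge1$, the left-hand side $\int w^k\phi_\ell\,d\nu$ equals the value at $w_0$ of the harmonic extension of $w^k\phi_\ell|_{S^1}$, while the right-hand side is $w_0^k\phi_\ell(w_0)$; expanding $\phi_\ell=\Phi_\ell+\overline{\Psi_\ell}$ and writing $\Psi_\ell(w)=\sum_{j\ge1}b_{\ell,j}w^j$, the resulting identity is $S_k+\overline{T_k}=U_k$ with $S_k=\sum_{j=1}^{k}\overline{b_{\ell,j}}\,w_0^{k-j}$, $T_k=\sum_{j>k}b_{\ell,j}w_0^{j-k}$, $U_k=w_0^k\,\overline{\Psi_\ell(w_0)}$. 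Using $S_{k+1}=w_0S_k+\overline{b_{\ell,k+1}}$, $T_{k+1}=w_0^{-1}T_k-b_{\ell,k+1}$, $U_{k+1}=w_0U_k$ and comparing the relations at indices $k$ and $k+1$ yields $\overline{T_k}\bigl(\overline{w_0}^{\,-1}-w_0\bigr)=0$, so $T_k=0$ for all $k$ (as $0<|w_0|<1$; the case $w_0=0$ is read off directly from $\nu=\tfrac{|dw|}{2\pi}$), whence $b_{\ell,j}=0$ for all $j$ by induction. Thus $\phi=h(z_0,\cdot)$ is holomorphic on $\mathbb{D}$ and continuous on $\overline{\mathbb{D}}$.

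Finally, $w\mapsto(z_0,w,\phi(w))$ parametrizes an analytic disk $\mathcal{G}_h(\{z_0\}\times\overline{\mathbb{D}})$ with boundary $\mathcal{G}_h(\{z_0\}\times S^1)\subset\mathcal{G}_h(\Gamma^2)$, so by the maximum principle the whole disk lies in $\widehat{\mathcal{G}_h(\Gamma^2)}$; this is the asserted $\mathcal{G}_h(\triangle)$ with $\triangle=\{z_0\}\times\mathbb{D}$. The step I expect to be the genuine obstacle is the coefficient computation above: one has to use the full strength of a representing measure for $p$ (identifying its pushforward as the Poisson measure and then extracting the vanishing of all Taylor coefficients of $\Psi$), since the weaker information ``$p$ is annihilated by every polynomial vanishing on $\gamma$'' is not sufficient; one should also be mildly careful with the boundary regularity of pluriharmonic functions on $\mathbb{D}^2$ that is being used to write $\phi=h(z_0,\cdot)$ as a continuous function, harmonic in the interior, with $\phi|_{S^1}=f(z_0,\cdot)$.
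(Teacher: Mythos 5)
Your argument is correct and structurally the same as the paper's: the paper also takes the hull point $\mathcal G_h(\zeta)$ with $|\zeta_1|=1$, uses that $\zeta_1$ is a peak point for the disk algebra in the $z_1$-variable (your representing-measure computation with $\tfrac{1}{2}(1+\bar z_0z_1)$ is precisely the standard proof of this localization) to conclude $\mathcal G_h(\zeta)\in\widehat{\mathcal G_h(\{z_0\}\times S^1)}$, and then deduces that every $h_j$ is holomorphic on the slice disk $\triangle$. The only real difference is the last step: the paper simply invokes Wermer \cite{Wermer65} (equivalently, the harmonic case of the maximality theorem, as in \cite{AxlerShields}) for the one-variable fact that a harmonic $\phi\in\mathcal C(\overline{\mathbb D})$ whose graph over $S^1$ has a hull point over $\mathbb D$ must be holomorphic, whereas you reprove this by identifying the pushed-forward representing measure as the Poisson measure and killing the antiholomorphic Fourier coefficients; this makes the lemma self-contained at the cost of length, and your care about harmonicity of $h(z_0,\cdot)$ on the boundary slice (limit of $h(rz_0,\cdot)$) is a point the paper leaves implicit. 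One small bookkeeping remark on your coefficient argument: comparing the relations at $k$ and $k+1$ gives $T_k=0$ only for those $k$ at which both relations are used, which yields $b_{\ell,j}=0$ for $j\ge 2$ directly; to get $b_{\ell,1}=0$ include the (trivially valid) relation at $k=0$, or use $S_1=U_1$ together with $|w_0|<1$ — either way the conclusion stands.
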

\begin{proof}
Choose a point $\zeta=(\zeta_1,\zeta_2)\in b\mathbb D^2\setminus\Gamma^2$ with $\mathcal G_h(\zeta)\in\widehat{\mathcal G_h(\Gamma^2)}$.  
Without loss of generality we assume that $|\zeta_1|=1$.  Let $\triangle$
be the disk $\triangle:=\{(\zeta_1,w):|w|< 1\}$.  Since 
each point of $b\mathbb D_{z_1}$ is a peak point for $\mathcal O(\mathbb C_{z_1})|_{\overline{\mathbb D}}$ it 
follows that $\mathcal G_h(\zeta)\subset\widehat{\mathcal G_h(b\triangle)}$.
By Wermer \cite{Wermer65} we have that $h_j$ is holomorphic on $\triangle$
for $j=1,...,N$.  \
\end{proof}

As we proceed we assume that there is no such disk $\triangle$, \emph{i.e.}, if we can locate 
a closed variety in $\widehat{\mathcal G_h(\Gamma^2)}\cap\mathcal G_h(\mathbb D^2)$, 
then it is automatically attached to $\mathcal G_h(\Gamma^2)$.  \

Let 
$$
\tilde Z:=\{z\in{\mathbb D^2}:\overline\partial h_{i_1}\wedge\overline\partial h_{i_2}(z)=0, \forall (i_1,i_2), 1\leq i_1,i_2\leq N \}.
$$
By Lemma \ref{pcgraph} we have that $\mathcal G_h(\overline{\mathbb D^2})$ is polynomially convex, and since 
$\mathcal G_h({\mathbb D^2}\setminus\tilde Z)$ is totally real by Lemma \ref{totreellLemma}, it 
follows from Theorem \ref{approx} that 
$$
\widehat{\mathcal G_h(\Gamma^2)} \subset\mathcal G_h(\Gamma^2\cup\tilde Z).
$$
(Because all totally real points are peak-points.)

\begin{lemma}\label{jacnotzero}
Assume that $\tilde Z\neq {\mathbb D^2}$, let $Z_\alpha$ be an irreducible component
of $\tilde Z$of dimension 1, and let $z_0\in Z_{\alpha}\cap\tilde Z_{reg}$ with $\mathcal G_h(z_0)\in\widehat{\mathcal G_h(\Gamma^2)}$.
Then $\mathcal G_h(Z_\alpha)\subset \widehat{\mathcal G_h(\Gamma^2)}$.  All the $h_j$-s 
are holomorphic along $Z_\alpha$.
\end{lemma}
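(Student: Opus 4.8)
The plan is to propagate polynomial hull membership along the whole irreducible component $Z_\alpha$ starting from the single point $z_0$, and then deduce holomorphicity of the $h_j$'s from the fact that $Z_\alpha$ is a $1$-dimensional piece of $\tilde Z$. First I would recall from Proposition~\ref{holodisk} (applied with $k=1$): if $Z_\alpha^1_H=Z_\alpha$, i.e. if $i^*H_1$ vanishes identically on $Z_\alpha$, then there is a holomorphic disk in $Z_\alpha$ on which all $h_j$ are holomorphic. But for a $1$-dimensional submanifold the condition $i^*H_1\equiv 0$ is exactly the statement that every $h_j$ restricted to $Z_\alpha$ is holomorphic, since $i^*H_1=\sum_j (i^*\debar h_j)\wedge e_j$. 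So the two assertions of the lemma are intertwined: once I know $\mathcal G_h(Z_\alpha)\subset\widehat{\mathcal G_h(\Gamma^2)}$, I still must rule out that $Z_\alpha$ carries a genuinely non-totally-real graph. Here is where the running hypothesis (no disk $\triangle$ in $b\D^2\setminus\Gamma^2$ with all $h_j$ holomorphic, plus no holomorphic disk anywhere — wait, that last is the hypothesis of Theorem~\ref{polydisk}, not \ref{maximality}) must be used carefully: in the setting of Theorem~\ref{maximality} the desired conclusion actually \emph{is} that such a $Z$ exists, so the point of this lemma is to \emph{exhibit} $Z_\alpha$ as a candidate variety attached to $\mathcal G_h(\Gamma^2)$ with all $h_j$ holomorphic on it.

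The key steps, in order. (1) Since $z_0\in Z_\alpha\cap\tilde Z_{reg}$ is a regular point of $\tilde Z$ lying on the $1$-dimensional component $Z_\alpha$, near $z_0$ the set $\tilde Z$ coincides with the submanifold $Z_\alpha$. I want to show $i^*H_1$ vanishes identically on $Z_\alpha$ near $z_0$. By the matrix description in Subsection~\ref{pharm}, for a $1$-dimensional submanifold with coordinate $\zeta$, $\tilde Z$-membership forces \emph{all} $2\times 2$ subdeterminants of $(\partial h_i/\partial\bar z_1,\partial h_i/\partial\bar z_2)$ to vanish, but I need the stronger statement that each $\partial h_j/\partial\bar\zeta=0$ along $Z_\alpha$. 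This follows because $Z_\alpha\subset\tilde Z$ means the $\debar h_i$'s, restricted to $Z_\alpha$, are pairwise proportional as $(0,1)$-forms; if \emph{some} $\debar h_{i_0}|_{Z_\alpha}\not\equiv 0$ then one shows (exactly as in the proof of Proposition~\ref{holodisk}, writing $h_{i_0}=\mathfrak{Re}\,g+f$ with $g,f$ holomorphic and intersecting with the level set of $g$) that $Z_\alpha$ is itself the level set on which $h_{i_0}$ becomes holomorphic, contradicting $\debar h_{i_0}|_{Z_\alpha}\not\equiv 0$ — or more directly, that all other $\debar h_i|_{Z_\alpha}$ are then holomorphic multiples forcing them to vanish too. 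Thus every $h_j$ is holomorphic on $Z_\alpha$, giving the second sentence of the lemma.

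(2) Now propagate the hull. Let $P=\{z\in Z_\alpha : \mathcal G_h(z)\in\widehat{\mathcal G_h(\Gamma^2)}\}$. By hypothesis $z_0\in P$, so $P\neq\emptyset$. The set $P$ is closed in $Z_\alpha$ (as $\widehat{\mathcal G_h(\Gamma^2)}$ is closed and $\mathcal G_h$ continuous). For openness: since all $h_j$ are holomorphic on $Z_\alpha$ by Step (1), the graph $\mathcal G_h(Z_\alpha)$ is a (locally closed) complex analytic curve in $\C^2\times\C^N$. Around any point $\mathcal G_h(z_1)$ with $z_1\in P$, a standard local maximum-principle / analytic-curve-in-the-hull argument (the curve $\mathcal G_h(Z_\alpha)$ is holomorphic, and a point of it lies in a polynomial hull, so a neighborhood of it within the curve also does — one can invoke Rossi's local maximum modulus principle for $\widehat{\mathcal G_h(\Gamma^2)}\setminus\mathcal G_h(\Gamma^2)$, noting the curve does not meet $\mathcal G_h(\Gamma^2)$ except along its boundary) shows a relative neighborhood of $z_1$ in $Z_\alpha$ lies in $P$. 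Hence $P$ is open and closed in the connected set $Z_\alpha$, so $P=Z_\alpha$, i.e. $\mathcal G_h(Z_\alpha)\subset\widehat{\mathcal G_h(\Gamma^2)}$. Finally, because no holomorphic disk $\triangle\subset b\D^2\setminus\Gamma^2$ with all $h_j$ holomorphic is present (running assumption), $Z_\alpha$ cannot be absorbed into the boundary sheets, so $\overline{Z_\alpha}\setminus Z_\alpha\subset\Gamma^2$ and $Z_\alpha$ is a bona fide variety attached to $\Gamma^2$.

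I expect the main obstacle to be Step (2)'s openness argument — precisely, making rigorous that a point of a holomorphic curve lying in $\widehat{\mathcal G_h(\Gamma^2)}$ drags a whole neighborhood of the curve into the hull. One must be careful that $\mathcal G_h(Z_\alpha)$ stays in $\widehat{\mathcal G_h(\Gamma^2)}\setminus\mathcal G_h(\Gamma^2)$ so that the local maximum modulus principle applies; near $bZ_\alpha$ (which meets $b\D^2$) one instead uses that each boundary point of $b\D^2$ is a peak point for $\mathcal O(\C^2)|_{\overline{\D^2}}$, together with Wermer's theorem as in Lemma~\ref{diskinbd}, to handle the approach to $b\D^2$. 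The interplay between "$Z_\alpha$ meets $\tilde Z_{reg}$" and "$Z_\alpha$ is $1$-dimensional and irreducible" is what keeps the propagation inside a single smooth curve and prevents branching from derailing the argument.
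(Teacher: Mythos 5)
Your Step (1) contains the fatal gap: you claim that $Z_\alpha\subset\tilde Z$ by itself forces every $h_j$ to be holomorphic along $Z_\alpha$. That implication is false. Membership in $\tilde Z$ is the vanishing of the \emph{ambient} wedges $\debar h_{i_1}\wedge\debar h_{i_2}$ (i.e.\ of $H_2$) at points of $Z_\alpha$, which only says the forms $\debar h_j$ are pairwise proportional there; it says nothing about the pullbacks $i^*\debar h_j$ to the curve. Concretely, take $N=2$, $h_1=\mathfrak{Re}(z_1^2)$, $h_2=\mathfrak{Re}(z_2^2)$: then $\debar h_1\wedge\debar h_2=\frac{1}{4}\bar z_1\bar z_2\, d\bar z_1\wedge d\bar z_2$, so $\tilde Z=\{z_1z_2=0\}\neq\D^2$ and $Z_\alpha=\{z_1=0\}$ is a one-dimensional irreducible component consisting of regular points away from the origin, yet $h_2|_{Z_\alpha}=\mathfrak{Re}(z_2^2)$ is not holomorphic. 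Your appeal to the argument of Proposition~\ref{holodisk} does not repair this: that argument needs $H_{k+1}$ to vanish on an \emph{open} set (here it does not, since $\tilde Z\neq\D^2$), and the level set $Y$ it produces is a level set of an auxiliary holomorphic $g$, not $Z_\alpha$. In the lemma, holomorphicity of the $h_j$ along $Z_\alpha$ is not a consequence of $Z_\alpha\subset\tilde Z$; it must be extracted from the hull hypothesis. The paper does exactly that: for a small ball $\Omega\ni z_0$, the previously established inclusion $\widehat{\cG_h(\Gamma^2)}\subset\cG_h(\Gamma^2\cup\tilde Z)$ (from Theorem~\ref{approx}) gives $K:=(b\Omega\times\C^N)\cap\widehat{\cG_h(\Gamma^2)}\subset\cG_h(bD_{z_0})$, Rossi's local maximum principle gives $\cG_h(z_0)\in\widehat K\subset\widehat{\cG_h(bD_{z_0})}$, and then Wermer's maximality theorem forces all $h_j$ to be holomorphic on $D_{z_0}$, since otherwise the hull of the graph over the circle $bD_{z_0}$ would be trivial.

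The same omission undermines Step (2). Your openness argument rests on the principle that a point of a holomorphic curve lying in a polynomial hull drags a relative neighborhood of the curve into the hull; this is false in general (the hull of the unit circle in $\C\times\{0\}\subset\C^2$ meets a transversal complex line in a single point). What makes the propagation legitimate is again the trapping $\widehat{\cG_h(\Gamma^2)}\subset\cG_h(\Gamma^2\cup\tilde Z)$ together with the local identity $\tilde Z=Z_\alpha$ near a regular point: this identifies the local ``boundary'' $K$ of the hull with a subset of $\cG_h(bD_{z_0})$, after which one concludes $K=\cG_h(bD_{z_0})$ and, all $h_j$ now being holomorphic on $D_{z_0}$, that the analytic disk $\cG_h(D_{z_0})$ lies in the hull by the maximum principle. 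Your open--closed--connected propagation along $Z_\alpha\cap\tilde Z_{reg}$ then matches the paper's final step, but as written both the holomorphicity claim and the openness step are missing the two key ingredients: the containment of the hull in $\cG_h(\Gamma^2\cup\tilde Z)$ and the use of Wermer's maximality theorem on small disks inside $Z_\alpha$.
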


\begin{proof}
Let $\Omega$ be a small open neighborhood of $z_0$ such that $\Omega\cap Z_\alpha$ 
is a smooth disk $D_{z_0}$. 
Then, if $\Omega$ is small enough, we have that $K:=(b\Omega\times\mathbb C^N)\cap\widehat{\mathcal G_h(\Gamma^2)}\subset\mathcal G_h(bD_{z_0})$, and so by Rossi's local maximum principle 
$$
\mathcal G_h(z_0)\in\widehat K\subset\widehat{\mathcal G_h(bD_{z_0})}.
$$  
By Wermer's maximality theorem it follows that all $h_j$-s are holomorphic 
on $Z_\alpha$ near $z_0$, and then $\mathcal G_h(D_{z_0})$ is contained in the hull,
since we must have that  $K=\mathcal G_h(bD_{z_0})$.  
Since this holds near any point of $Z_\alpha\cap\tilde Z_{reg}$ in the hull, 
it follows that 
$Z_\alpha\cap\tilde Z_{reg}$ is contained in the hull, hence $Z_\alpha$ is contained in the hull.  \

\end{proof}

\medskip

Finally we need to consider the case that $\tilde Z={\mathbb D^2}$.
We want to change coordinates.  For each $j$ let $g_j\in\mathcal O(\mathbb D^2)$
with $Im(g_j)=Im(h_j)$, and let $\varphi_j\in\mathcal O(\mathbb D^2)$
with $Re(\varphi_j)=u_j:=h_j-g_j$.  Then $u_j$ is real for all $j$, 
and for any compact set $K\subset\mathbb D^2$ we have that
$\mathcal G_h(K)$ 
being polynomially convex is equivalent to $\mathcal G_u(K)$ being polynomially convex.  
We will show that $\widehat{\mathcal G_h(\Gamma^2)}$ contains the graph of a leaf of the (possibly singular) Levi-foliation of a level set $\{u_j=c\}$ for at least one $j$.    \
We want to use our coordinate change to study the hull, but since the $u_j$-s are
not necessarily continuous up to $\Gamma^2$ we will consider a certain exhaustion 
of $\mathbb D^2$.  \

By the assumption that $\widehat{\mathcal G_h(\Gamma^2)}\cap (\mathcal G_h(b\mathbb D^2\setminus\Gamma^2))=\emptyset$ there exist sequences of 
real numbers $r_j\rightarrow 1, \epsilon_j\rightarrow 0$, such that the following holds: defining 
\begin{align*}
Q_j :&= \{(z_1,z_2):|z_1|=r_j, r_j-\epsilon_j\leq |z_2|\leq r_j\}\\
 & Ê\cup \{(z_1,z_2):|z_2|=r_j, r_j-\epsilon_j\leq |z_1|\leq r_j\}
\end{align*}
we have that 
\begin{itemize}
\item[1)]$\tilde K_j:=\widehat{\mathcal G_h(\Gamma^2)}\cap \mathcal G_h(b\mathbb D_{r_j}^2)\subset\mathcal G_h(Q_j)$.
\end{itemize}

Let $K_j$ denote the projection of $\tilde K_j$ to $\mathbb C^2$.  Then the $h_j$-s, 
and consequently the $u_i$-s, are smooth in a neighborhood of $K_j$ for all $j$. 

\medskip

\begin{lemma}
Assume that there is a subsequence $r_{j_k}$ of $r_j$ such that 
for each $k$ there exists a point $\mathcal G_h((a_k,b_k))\in\widehat{\mathcal G_h(Q_{j_k})}$ with 
$|a_k|=r_{k_{j_k}}$ and $|b_k|<r_{j_k}-\epsilon_{j_k}$.
Then there exists a disk $\mathbb D_a=\{a\}\times\mathbb D$ with $|a|=1$, 
and all $h_j$-s holomorphic on $\mathbb D_a$.
\end{lemma}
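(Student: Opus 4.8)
The plan is to peak off the $z_1$--coordinate so as to reduce to a one--variable problem on the slice $\{z_1=a_k\}$, to apply there the one--variable case of Theorem~\ref{polydisk} (\v{C}irca's theorem), and then to let $r_{j_k}\to 1$. First I would project onto the $z_1$--coordinate. The set $Q_{j_k}$ meets $\{z_1=a_k\}$ exactly in $\{a_k\}\times\{r_{j_k}-\epsilon_{j_k}\le|z_2|\le r_{j_k}\}$, while $\pi_{z_1}(\cG_h(Q_{j_k}))$ is the closed annulus $\{r_{j_k}-\epsilon_{j_k}\le|z_1|\le r_{j_k}\}$, whose polynomial hull is the disc $\{|z_1|\le r_{j_k}\}$; since $|a_k|=r_{j_k}$, the point $a_k$ lies on the boundary of that disc and hence is a peak point for the polynomials in $z_1$ on the annulus (peak function $\tfrac12(1+\overline{a_k}z_1/r_{j_k}^2)$). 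Part i) of Proposition~\ref{easybishop}, applied to the coordinate function $z_1$, then identifies the fibre of $\widehat{\cG_h(Q_{j_k})}$ over $a_k$ with the polynomial hull of the fibre of $\cG_h(Q_{j_k})$ over $a_k$. Freezing $z_1=a_k$ and identifying the slice with $\C_{z_2}\times\C^N$, and writing $\cG_{h(a_k,\cdot)}(\,\cdot\,)$ for the graph of $z_2\mapsto h(a_k,z_2)$, this yields
\[
\big(b_k,\,h(a_k,b_k)\big)\in\widehat{\cG_{h(a_k,\cdot)}\big(\{r_{j_k}-\epsilon_{j_k}\le|z_2|\le r_{j_k}\}\big)},\qquad |b_k|<r_{j_k}-\epsilon_{j_k}.
\]

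Next I would show that this forces every $h_j(a_k,\cdot)$ to be holomorphic on $\D$. Each $h_j(a_k,\cdot)$ is harmonic on $\D$, hence real--analytic across the closed disc of radius $r_{j_k}$, so the one--variable case of Theorem~\ref{polydisk} (rescaled to that disc) gives: either there is a holomorphic disc in $\{|z_2|\le r_{j_k}\}$ on which all $h_j(a_k,\cdot)$ are holomorphic --- in one variable such a disc has nonempty interior, so by real--analyticity all $h_j(a_k,\cdot)$ are then holomorphic on all of $\D$ --- or the uniform algebra generated by $z_2,h_1(a_k,\cdot),\dots,h_N(a_k,\cdot)$ on $\{|z_2|\le r_{j_k}\}$ is all of $\cC(\{|z_2|\le r_{j_k}\})$. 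The second alternative cannot occur: by Lemma~\ref{pcgraph} the graph $T_k:=\cG_{h(a_k,\cdot)}(\{|z_2|\le r_{j_k}\})$ is polynomially convex, and if polynomials are dense in $\cC(T_k)$ then a Tietze extension together with polynomial approximation shows the closed subset $L_k:=\cG_{h(a_k,\cdot)}(\{r_{j_k}-\epsilon_{j_k}\le|z_2|\le r_{j_k}\})$ is itself polynomially convex, so $\big(b_k,h(a_k,b_k)\big)\in\widehat{L_k}=L_k$ would force $|b_k|\ge r_{j_k}-\epsilon_{j_k}$, contradicting the display above. Hence all $h_j(a_k,\cdot)$ are holomorphic on $\D$.

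Finally I would pass to the limit. Since $h_j\in\cC(\overline{\D^2})$, each $h_j(a_k,\cdot)$ lies in the disc algebra $A(\D)$. Passing to a subsequence we may assume $a_k\to a$, and then $|a|=\lim_k r_{j_k}=1$; uniform continuity of $h_j$ on the compact set $\overline{\D^2}$ gives $h_j(a_k,\cdot)\to h_j(a,\cdot)$ uniformly on $\overline{\D}$. As $A(\D)$ is closed under uniform limits, $h_j(a,\cdot)\in A(\D)$ for every $j$; that is, all the $h_j$ are holomorphic on the disc $\D_a=\{a\}\times\D$, which is the assertion.

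I expect the middle step to be the main obstacle: one must extract from the \emph{single} statement ``$(b_k,h(a_k,b_k))$ lies in the polynomial hull of the graph over the thin annulus'' the much stronger conclusion that \emph{all} of the $h_j(a_k,\cdot)$ are holomorphic. This is precisely where the one--variable dichotomy of Theorem~\ref{polydisk} has to be coupled with the polynomial convexity furnished by Lemma~\ref{pcgraph} in order to rule out the ``full algebra'' alternative. By contrast, the peak--point bookkeeping of the first step (via Proposition~\ref{easybishop}) and the normal--families/uniform--limit argument of the last step are comparatively soft.
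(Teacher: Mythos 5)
Your proposal is correct and follows essentially the same route as the paper: localize to the fibre over $a_k$ via the peak point $a_k$ for polynomials in $z_1$ (the paper invokes this peak-point property directly, you formalize it through Proposition~\ref{easybishop}~i)), apply the one-variable Axler--Shields/\v{C}irka dichotomy on the slice $\{a_k\}\times\overline{\D}_{r_{j_k}}$ to force all $h_j(a_k,\cdot)$ to be holomorphic, and then pass to a subsequence $a_k\to a\in b\D$. Your write-up merely supplies details the paper leaves implicit (ruling out the full-algebra horn via polynomial convexity of the graph, and the uniform-limit argument giving holomorphy on $\D_a$).
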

\begin{proof}
This is similar to the proof of Lemma \ref{diskinbd}. 
Since each point $a_k$ is a peak point for $\mathcal O(\overline{\mathbb D_{|a_k|}})$
it follows from \cite{AxlerShields} (alternatively Wermer's Maximality Theorem combined with Theorem \ref{domain}) that all $h_j$-s are 
holomorphic on $\mathbb D_{a_k}$.  By passing to 
a subsequence we may assume that $a_k\rightarrow a\in b\mathbb D$
as $k\rightarrow\infty$.  
\end{proof}

We will now make the assumption that 
\begin{itemize}
\item[2)] none of the functions $h_j$ are holomorphic on any of the disks
in $b\mathbb D^2_{r_j}\setminus Q_j$, or, equivalently, for any point 
$x\in b\mathbb D^2_{r_j}\setminus Q_j$  we have that 
$\mathcal G_h(x)\notin\widehat{\mathcal G_h(Q_j)}$. 
\end{itemize}

We will now consider the case that 
$\tilde Z={\mathbb D^2}$ and that at least one of the $u_j$-s, say $u_1$, is non-constant.  
Define $L_c=L_c(u_1):=\{z\in\mathbb D^2: u_1(z)=c\}$.

We have that 
$$
L_c(u_1)=\underset{r\in\mathbb R}{\bigcup}L_{c+i\cdot r}(\varphi_1),
$$
so $L_c$ is the disjoint union of analytic sets, which we will call 
leaves of the (singular) lamination $L_c$.

\begin{lemma}
There exists a discrete set $A\subset\mathbb D^2$
such that near any point $z_0\in\mathbb D^2\setminus A$, the 
set $\{\varphi_1(z)=\varphi_1(z_0)\}$ is a smooth surface.  
\end{lemma}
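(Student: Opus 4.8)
The plan is to let $A$ be the set of points $z_0\in\mathbb D^2$ at which the germ of $\{\varphi_1=\varphi_1(z_0)\}$ fails to be a smooth complex curve, and to prove this set is discrete by analysing the critical locus of $\varphi_1$ component by component. First I would note that $\varphi_1$ is non-constant, since $\mathfrak{Re}\,\varphi_1=u_1$ is; hence by the identity theorem $\varphi_1-c$ never vanishes identically, and the critical locus
$$
C:=\{z\in\mathbb D^2:\partial_{z_1}\varphi_1(z)=\partial_{z_2}\varphi_1(z)=0\}
$$
is a proper analytic subset of $\mathbb D^2$. By the implicit function theorem $\{\varphi_1=\varphi_1(z_0)\}$ is a smooth curve near every $z_0\notin C$, so every bad point lies in $C$. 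Being a proper analytic subset of the $2$-dimensional manifold $\mathbb D^2$, $C$ has locally finitely many irreducible components $C_\alpha$, each of dimension $0$ or $1$, and the union $A_0$ of the $0$-dimensional ones is closed and discrete.

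Next I would treat a $1$-dimensional component $C_\alpha$. Since $d\varphi_1$ vanishes on $C_\alpha$ and $\textrm{Reg}(C_\alpha)$ is connected and dense in the irreducible set $C_\alpha$, the holomorphic function $\varphi_1$ has vanishing differential on $\textrm{Reg}(C_\alpha)$, hence is constant there and therefore on all of $C_\alpha$; call the value $c_\alpha$. Then every $z_0\in C_\alpha$ satisfies $\varphi_1(z_0)=c_\alpha$, so such a $z_0$ can be bad only if it is a singular point of the fixed fibre $F_\alpha:=\{\varphi_1=c_\alpha\}$. But $F_\alpha$ is the zero set of the nonzero holomorphic function $\varphi_1-c_\alpha$, hence a pure $1$-dimensional analytic subset of $\mathbb D^2$, so $\textrm{Sing}(F_\alpha)$ is a closed discrete subset of $\mathbb D^2$. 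I would then set
$$
A:=A_0\cup\bigcup_{\alpha}\textrm{Sing}(F_\alpha),
$$
the union taken over the $1$-dimensional components; as $\textrm{Sing}(F_\alpha)\subset C_\alpha$ and $\{C_\alpha\}$ is locally finite, this is a locally finite union of closed discrete sets, hence closed and discrete.

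To finish, I would verify that this $A$ works. If $z_0\notin C$, the level set $\{\varphi_1=\varphi_1(z_0)\}$ is a smooth curve near $z_0$ by the implicit function theorem. If $z_0\in C\setminus A$, then $z_0\notin A_0$, so $z_0$ lies on some $1$-dimensional component $C_\alpha$; thus $\varphi_1(z_0)=c_\alpha$ and $z_0\notin\textrm{Sing}(F_\alpha)$, so $\{\varphi_1=\varphi_1(z_0)\}=F_\alpha$ is a smooth complex curve (a smooth surface) in a neighbourhood of $z_0$.

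The only step that requires genuine care is the case $\dim C=1$: one must observe that a positive-dimensional critical locus causes no trouble because $\varphi_1$ is forced to be constant along each of its components, so that the bad points sitting on it all lie in the singular set of a single fibre, and that singular set is automatically discrete because the fibre is a pure $1$-dimensional analytic set. Everything else — that singular points of a fibre of $\varphi_1$ are critical points, that an irreducible analytic curve has connected regular locus, that the singular locus of a pure $1$-dimensional analytic set is discrete, and that a locally finite union of closed discrete sets is closed and discrete — are standard facts about analytic sets which I would simply cite.
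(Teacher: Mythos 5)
Your argument is correct in substance and follows the same route as the paper: the bad points lie in the critical locus $C=\{\partial\varphi_1=0\}$; on each one-dimensional component of $C$ the function $\varphi_1$ is constant, so the bad points sitting on such a component are singular points of a single fibre of $\varphi_1$; and the singular locus of such a fibre (a pure one-dimensional analytic set) is discrete. The paper only records the special case where $C$ is a single connected one-dimensional variety with $\varphi_1\equiv 0$ on it (``assume for simplicity''), taking essentially $A=\mathrm{Sing}(\{\varphi_1=0\})$; you supply the general bookkeeping (zero-dimensional components, several components, several critical values, local finiteness), which is the honest way to state the lemma. One step is stated inaccurately: the inclusion $\mathrm{Sing}(F_\alpha)\subset C_\alpha$ need not hold; what is true is only $\mathrm{Sing}(F_\alpha)\subset C\cap F_\alpha$, since the fibre attached to a one-dimensional component can also be singular at critical points lying on other components of $C$ (for instance $\varphi_1=z_1z_2(z_1-\tfrac12)^2$, where $C_\alpha=\{z_1=\tfrac12\}$, $c_\alpha=0$, but the fibre $\{\varphi_1=0\}$ is singular at the isolated critical point $(0,0)\notin C_\alpha$). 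This does not damage your conclusion, and the repair is one line: any point of $\mathbb{D}^2$ has a neighbourhood meeting only finitely many components of $C$, so $\varphi_1$ assumes only finitely many values on $C$ there; hence only finitely many distinct fibres $F_\alpha$ can have singular points in that neighbourhood, each contributing a closed discrete set, and local finiteness of the union --- hence discreteness of $A$ --- follows.
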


\begin{proof}
Let 
$$
W=\{\partial\varphi_1=0\}.
$$
Assume for simplicity that $W$ is a connected 1-dimensional variety
with $\varphi_1|_W\equiv 0$.  
Let $\tilde W:=\{\varphi_1=0\}$.  Then $A=\tilde W_{sing}\cap W$
is a discrete set.  
\end{proof}

\begin{lemma}\label{lamination}
Assume that $\tilde Z={\mathbb D^2}$.
Assume that $\mathcal G_{h}(x_0)\in\widehat{\mathcal G_{h}(Q_j)}\cap\mathcal G_{h}(\mathbb D_{r_j})$ and set $c_1=u_1(x_0)$.   
Then there exists a point $z_0\in (L_{c_1}\setminus A)\cap\mathbb D^2_{r_j}$
with $\mathcal G_{h}(z_0)\in\widehat{\mathcal G_{h}(Q_j)}$.  Moreover, 
for any point $z_0\in L_{c_1}\setminus A$, we have that 
$\mathcal G_h(z_0)\in\widehat{\mathcal G_h(Q_j)}$ implies 
that $\mathcal G_h(\mathcal L_{z_0}\cap\mathbb D^2_{r_j})\subset\widehat{\mathcal G_h(Q_j)}$
with $h$ holomorphic along $\mathcal L_{z_0}$, where $\mathcal L_{z_0}$
denotes the leaf through $z_0$.

\end{lemma}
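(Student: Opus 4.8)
The strategy is to propagate the hull-membership along the leaf by a continuity/maximum-modulus argument, exactly paralleling the structure of Lemma \ref{jacnotzero}, but now working leafwise inside the singular lamination $L_{c_1}$ and on the exhaustion domain $\mathbb D^2_{r_j}$ rather than all of $\mathbb D^2$. First I would address the easy reduction: given $x_0\in L_{c_1}\setminus A$ (or an arbitrary $x_0$ with $u_1(x_0)=c_1$), note that $L_{c_1}=\bigcup_{r}L_{c_1+ir}(\varphi_1)$ is a disjoint union of analytic sets, so $x_0$ lies on some leaf $\mathcal L_{x_0}=\{\varphi_1=\varphi_1(x_0)\}$, which away from the discrete set $A$ is a smooth complex curve. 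If $x_0\in A$ we can perturb slightly: since $\widehat{\mathcal G_h(Q_j)}$ is compact and the totally real points are peak points, membership of $\mathcal G_h(x_0)$ forces nearby leaf points into the hull as well (Rossi's local maximum principle applied on a small bidisk neighborhood), giving a point $z_0\in(L_{c_1}\setminus A)\cap\mathbb D^2_{r_j}$ in the hull; this is the first assertion.

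For the main (second) assertion, fix $z_0\in L_{c_1}\setminus A$ with $\mathcal G_h(z_0)\in\widehat{\mathcal G_h(Q_j)}$, and let $\mathcal L=\mathcal L_{z_0}$ be the leaf through $z_0$, which is a smooth one-dimensional analytic subset of $\mathbb D^2$ near $z_0$. The key point is that $\mathcal L$ is a \emph{leaf of the Levi foliation of} $\{u_1=c_1\}$: on $\mathcal L$ we have $\varphi_1$ constant, hence $u_1=\mathfrak{Re}\,\varphi_1$ constant, and since $g_j$ with $\mathrm{Im}\,g_j=\mathrm{Im}\,h_j$ is holomorphic while $h_j-g_j=u_j$ — wait, this last identity is only claimed for $j=1$; for $j\geq 2$ we only know $h_j$ is pluriharmonic. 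So the holomorphicity of all $h_j$ along $\mathcal L$ must itself be \emph{derived}, not assumed. I would derive it from Wermer's maximality theorem as in Lemma \ref{jacnotzero}: take a small open $\Omega\ni z_0$ so that $\Omega\cap\mathcal L$ is a smooth disk $D_{z_0}$, observe that $(b\Omega\times\mathbb C^N)\cap\widehat{\mathcal G_h(Q_j)}\subset\mathcal G_h(bD_{z_0})$ when $\Omega$ is small (using assumption 2) and that $\tilde Z=\mathbb D^2$ to rule out escape), apply Rossi's local maximum principle to get $\mathcal G_h(z_0)\in\widehat{\mathcal G_h(bD_{z_0})}$, then invoke Wermer \cite{Wermer65} (the graph version, polynomial convexity coming from Lemma \ref{pcgraph} restricted to the disk) to conclude all $h_j$ are holomorphic on $D_{z_0}$ and that the whole analytic disk $\mathcal G_h(D_{z_0})$ lies in the hull.

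Finally I would run the connectedness/propagation step: the set of points $z\in\mathcal L\cap\mathbb D^2_{r_j}$ with $\mathcal G_h(z)\in\widehat{\mathcal G_h(Q_j)}$ and all $h_j$ holomorphic near $z$ on $\mathcal L$ is nonempty (contains $z_0$), open in $\mathcal L\cap\mathbb D^2_{r_j}$ (by the local argument just given, since once $h$ is holomorphic on a neighboring disk the hull contains that disk), and closed (the hull is compact and holomorphicity passes to limits of analytic disks); since leaves of $\mathcal L_{z_0}$ are connected analytic sets, it is all of $\mathcal L_{z_0}\cap\mathbb D^2_{r_j}$, and holomorphicity of $h$ along the full leaf $\mathcal L_{z_0}$ follows by analytic continuation. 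The main obstacle I anticipate is the openness step near the singular locus $A$ and near $\partial\mathbb D^2_{r_j}$: one must ensure the small neighborhood $\Omega$ used for Rossi's principle can be chosen so that $(b\Omega\times\mathbb C^N)\cap\widehat{\mathcal G_h(Q_j)}$ really is confined to $\mathcal G_h(bD_{z_0})$ — this is where assumptions 1) and 2) and the hypothesis $\tilde Z=\mathbb D^2$ (so the hull cannot leak onto a transverse totally real direction) are essential — and to handle leaves that accumulate on $A$ one restricts attention to $L_{c_1}\setminus A$ and uses that $A$ is discrete so the leaf, minus finitely many points, stays smooth.
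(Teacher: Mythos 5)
Your overall scheme---localize with Rossi's principle, conclude holomorphy on a small leaf-disk, then propagate along the leaf by an open/closed argument---has the right shape, but it hinges on one step that is not available and that constitutes the actual difficulty of this lemma: the claimed inclusion $(b\Omega\times\C^N)\cap\widehat{\cG_h(Q_j)}\subset\cG_h(bD_{z_0})$ for small $\Omega\ni z_0$. In Lemma \ref{jacnotzero} the analogous inclusion came from Theorem \ref{approx}: there $\tilde Z\neq\D^2$, the graph over $\D^2\setminus\tilde Z$ is totally real, and hence $\widehat{\cG_h(\Gamma^2)}\subset\cG_h(\Gamma^2\cup\tilde Z)$, so near a regular point of a one-dimensional component the hull trace on $b\Omega\times\C^N$ is forced onto $\cG_h(bD_{z_0})$. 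In the present case $\tilde Z=\D^2$ is exactly the degenerate situation where that mechanism gives nothing: there are no totally real graph points over $\D^2$ (so your appeal to ``totally real points are peak points'' also has no force here), and assumption 2) only controls hull points lying over $b\D^2_{r_j}$, not inside $\D^2_{r_j}$. A priori the hull near $\cG_h(z_0)$ can sit over a set of real dimension $3$; even after the correct reduction it sits over $L_{c_1}\cap b\Omega$, which contains the boundary circles of a whole one-parameter family of nearby leaves, not just $bD_{z_0}$. So Rossi only gives $z_0\in\widehat K$ for a $K$ much larger than $\cG_h(bD_{z_0})$, and the ``Rossi then Wermer'' transfer from Lemma \ref{jacnotzero} breaks down; the same issue already affects your proof of the first assertion, where you need (but do not show) that hull points near $x_0$ stay on $L_{c_1}$.

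The paper's proof supplies precisely the two missing ingredients. First, it replaces $h$ by the real-valued pluriharmonic map $u=(u_1,\dots,u_N)$ (these $u_j$, $g_j$, $\varphi_j$ are defined for \emph{every} $j$, not only $j=1$ as you suggest; graph hulls are unchanged under subtracting holomorphic functions) and applies Proposition \ref{easybishop}(i) to the projection onto the $\C^N$-coordinates: since $u(Q_j)\subset\R^N$, every value is a peak point, so the fiber of the hull over $c=u(z_0)$ equals $\widehat{\cG_u(Q_j^c)}$ with $Q_j^c=\{z\in Q_j:\,u(z)=c\}$; as $u\equiv c$ on $Q_j^c$ and $\pm u_i$ are plurisubharmonic near the hull, this reduces everything to $z_0\in\widehat{Q_j^c}\subset L_c$ in $\C^2$ (which also yields the first assertion, using that $A$ is discrete). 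Second, since the hull trace $K=\widehat{Q_j^c}\cap b\Omega$ is \emph{not} confined to $bD_{z_0}$, the paper uses a bespoke separation: $\varphi_1$ maps $L_c\cap\overline\Omega$ into $i\R$ with $D_{z_0}=\{\varphi_1=0\}$, and with $\tau$ holomorphic, $\tau(0)=1$, $0<\tau<1$ on $i\R\setminus\{0\}$, the functions $g\cdot(\tau\circ\varphi_1)^m$ would separate $z_0$ from $K$ unless $bD_{z_0}\subset K$; hence $D_{z_0}\subset\widehat{Q_j^c}$. Holomorphy of all $h_i$ along the leaf then comes for free, since every $u_i$ is constant on $\widehat{Q_j^c}$ and $h_i=g_i+u_i$---no appeal to Wermer's theorem is needed. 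Your final connectedness/propagation step is consistent with the paper, but as written the proof has a genuine gap at the confinement step, which is the heart of the matter.
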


\begin{proof}

We have that $\mathcal G_h(z_0)\in\widehat{\mathcal G_h(Q_j)}$
is equivalent to $\mathcal G_u(z_0)\in\widehat{\mathcal G_u(Q_j)}$.

Let $c=u(z_0)$ and  $Q_j^c:=\{z\in Q_j: h(z)=c\}$.
By Proposition \ref{easybishop} we have that 
$$
\mathcal G_{u}(z_0)\in\widehat{\mathcal G_{u}(Q_j^c)}\subset L_c.
$$
Since $Q_j^c$ is a level set of $u$ this means that $z_0\in\widehat{Q_j^c}$.
It also follows that the hull contains graphs over the regular points, because 
the singular set is discrete.

\begin{lemma}\label{leafinhull}
Let $z_0\in L_{c_1}\cap (\mathbb D_{r_j}^2\setminus A)$ and let $\mathcal L_{z_0}$ denote the leaf through $z_0$.
Then $z_0\in\widehat{Q_j^c}$ if and only if $\mathcal L_{z_0}\subset\widehat{Q_j^c}$.
\end{lemma}
\begin{proof}
This is quite similar to the proof of Lemma \ref{jacnotzero}.
Choose a small neighborhood $\Omega$ of $z_0$ such that $D_{z_0}=\Omega\cap\mathcal L_{z_0}$
is a smoothly bounded disk.  We may assume that $\varphi_1(z_0)=0$ and 
$\varphi_1((L_c\cap\overline{\Omega})\setminus D_{z_0})\subset i\cdot\mathbb R\setminus\{0\}$. \

Let $K:=\widehat{Q_j^c}\cap b\Omega$.  Then $z_0\in\widehat{K}$ by Rossi's principle.  
This implies that $bD_{z_0}\subset K$.   Otherwise, let $K_{z_0}:=bD_{z_0}\cap K$
and let $g$ be holomorphic with $|g(z_0)|>\|g\|_{K_{z_0}}$.  Let $\tau$
be holomorphic on $\mathbb C$, $\tau(0)=1$, $0<\tau(z)<1$ for $z\in i\cdot\mathbb R\setminus\{0\}$. 
Then $g\cdot (\tau\circ\varphi_1)^m$ separates $z_0$ from $K$ for large $m$.  This is a
contradiction. \

Then clearly $D_{z_0}$ is contained in the hull, and since $K_{z_0}=bD_{z_0}$
we have that  $u_2,...,u_N$
are also constant on $D_{z_0}$.

\end{proof}

Finally note that if $\mathcal L_{z_0}\subset\widehat{Q^c_j}$, 
then $\mathcal G_u(\mathcal L_{z_0})\subset \widehat{\mathcal G_u(Q^c_j)}$.

\end{proof}

\emph{Proof of Theorem \ref{maximality}:}

Let $h_j$ be the pluriharmonic extension of $f_j$ to $\mathbb D^2$ for $j=1,...,N$.
Clearly we may assume that not all $h_j$-s are constant.    
If all $h_j$-s are holomorphic on a disk $\triangle$ in $b\mathbb D^2\setminus\Gamma^2$
the algebra is clearly not generated, and the conclusion of the theorem would hold.  
So from now on we assume that not all of the functions are holomorphic any of these disks. 
By Lemma \ref{diskinbd} none of these disks intersect the hull.  

\medskip

Consider 
$$
\tilde Z:=\{z\in {\mathbb D^n}:\overline\partial h_{i_1}\wedge\overline\partial h_{i_2}(z)=0, \forall (i_1,i_2), 1\leq i_1,i_2\leq N \}.
$$

Assume that $\tilde Z\neq {\mathbb D^2}$.   If $\mathcal G_h(\Gamma^2)$ is not polynomially convex, then according to Lemma \ref{jacnotzero} there is an irreducible component $Z_\alpha$ of $\tilde Z$
on which all $h_j$-s are holomorphic in the hull.   By assumption $\overline Z_\alpha\cap b\mathbb D^2\subset\Gamma^2$.

\medskip

Finally we consider the case that $\tilde Z={\mathbb D^2}$.  
We assume that $h_1$ is non-holomorphic.  
Let $\mathcal G_h(x_0)\in\widehat{\mathcal G_h(\Gamma^2)}$. 
For $j$ large enough such that $x_0\in\mathbb D^2_{r_j}$ 
we then have that $\mathcal G_h(x_0)\in\widehat{\mathcal G_h(Q_j)}$
(by 1) and Rossi's principle).  \

By Lemma \ref{lamination} we may assume that $x_0\notin A$.  
Assume that $\varphi_1(x_0)=0$ and let $Z$ denote the 
irreducible component of $\{z\in\mathbb D^2:\varphi_1(z)=0\}$
containing $x_0$.  According to Lemma \ref{lamination}
we have that $\mathcal G_h(Z\cap\mathbb D^2_{r_j})\subset \widehat{\cG_h(Q_j)}$
for all $j$.   It follows that $G_h(Z)\subset \widehat{\cG_h(\Gamma^2)}$.

$\hfill\square$

\medskip

\begin{example}\label{example}
Let $f(z_1,z_2):=Re(z_1 + c\cdot z_2)$ with $c\in\mathbb C^*$.  
Since $f$ is real and pluriharmonic on $\mathbb C^2$ we set $h=f$. 
We will show that $[z_1,z_2,f]_{\Gamma^2}=\mathcal C(\Gamma^2)$.
Otherwise, by Theorem \ref{maximality}, there is a non-trivial closed variety
$Z\subset\mathbb D^2$ with $\overline Z\setminus Z\subset\Gamma^2$ on which $h$ is holomorphic.  Then $h$ is constant 
$Z$, so $Z$ is contained in a hypersurface
$$
\{z_1 + c\cdot z_2 = r + i\cdot s: r \mbox{ fixed and } s\in\mathbb R\}.
$$
But then $Z$ is is contained in a complex line 
$$
L_k:=\{z_1 + c\cdot z_2 = k: k\in\mathbb C\}.
$$
Since $L_k\cap b\mathbb D^2$ is not contained in $\Gamma^2$ this is impossible. 

\end{example}

\section{Proof of Theorem \ref{domain}}\label{generalsektion}
Assume that there is no analytic disk in $\Omega$ where all the $h_j$ are holomorphic. We will prove that 
the polynomials in $\C^n\times \C^N$ are dense in the uniform algebra of continuous functions on the graph 
$\cG_h(\overline{\Omega})$. By Lemma \ref{pcgraph} we have that $\cG_h(\overline{\Omega})$ is polynomially convex.

\smallskip

We will define a stratification of $\cG_h(\overline{\Omega})$ so that we can use Theorem~\ref{approx}. 
As in Section~\ref{bidisksektion} we begin by stratifying $\overline{\Omega}$:
Let 
\begin{eqnarray*}
Y_n &=& \overline{\Omega}, \\
Y_{n-1} &=& b\Omega \cup \Omega^n_H=:b\Omega \cup Z_{n-1};
\end{eqnarray*}
see Subsection~\ref{pharm} for the definition of $\Omega^n_H$. 
Since there is no analytic disk in $\Omega$ where all $h_j$ are holomorphic, 
it follows from Proposition~\ref{holodisk} that $\mbox{dim}\, Z_{n-1} \leq n-1$. 
For an analytic set $V$ we write $V'$ for the union of the irreducible components of $V$ of maximal dimension 
and $V''$ for the union of the rest of the components. Let
\begin{eqnarray*}
Y_{n-2} &=& b\Omega \cup \textrm{Sing}(Z_{n-1}) \cup Z''_{n-1} \cup \textrm{Reg}(Z'_{n-1})^{n-1}_H \\
&=:& b\Omega \cup Z_{n-2}.
\end{eqnarray*}
Again, from Proposition~\ref{holodisk} it follows that $\mbox{dim}\, Z_{n-2}\leq n-2$; notice that $\mbox{dim}\, (Z_{n-1})_{sing}\leq n-2$ and 
$\mbox{dim}\, Z''_{n-1}\leq n-2$ automatically. We define recursivly
\begin{eqnarray*}
Y_{k} &=& b\Omega \cup \textrm{Sing}(Z_{k+1}) \cup Z''_{k+1} \cup \textrm{Reg}(Z'_{k+1})^{k+1}_H \\
&=:& b\Omega \cup Z_{k}
\end{eqnarray*}
and from Proposition~\ref{holodisk} we have $\mbox{dim}\, Z_k \leq k$. Moreover, $Y_k\setminus Y_{k-1}$ is (either empty or)
a $k$-dimensional complex manifold where $H_k$ is non-vanishing, and so, by Corollary~\ref{dbarCor}, 
the graph of $h|_{Y_k\setminus Y_{k-1}}$ is a totally real manifold. We define our stratification of $\cG_h(\overline{\Omega})$ as follows:
\begin{equation*}
X_0:=\cG_h(Y_0) \subset \cdots \subset X_n:=\cG_h(Y_n).
\end{equation*}
We have seen that $X_n$ is polynomially convex, that $X_k\setminus X_{k-1}$ is a totally real manifold, and that 
$X_0$ is the union of the image of $b\Omega$ and a discrete set in $\Omega$. Since by assumption 
$[z_1,\ldots,z_n,h_1,\ldots,h_N]_{b\Omega} = \cC(b\Omega)$ 
it follows that $\hol(X_0)$ is dense in $\cC(X_0)$. By Theorem~\ref{approx} we are thus done.

\bibliographystyle{amsplain}

\end{document}